\newcommand{\DOIFPDF}[2]{\ifx\pdfoutput\undefined #2\else#1\fi}
\newcommand{\EM}{\ensuremath}
\newcommand{\PDFABLE}[2]{%
\newif\ifpdf%
\ifx\pdfoutput\undefined\pdffalse%
\else\pdftrue\pdfoutput=1\pdfcompresslevel=9\fi%
\ifpdf%
 \usepackage#1
 \usepackage[pdftex,%
             a4paper,%
             colorlinks,%
             citecolor=blue,%
             pagebackref,%
             plainpages=false]{hyperref}%
\else%
 \usepackage#2
 \usepackage{url}
\fi}
\newcommand{\@THMSTYLES}{%
  \newtheoremstyle{bodyrm}
  {3pt}
  {3pt}
  {}
  {}
  {\bfseries\sffamily}
  {.}
  { }
  {}
  \newtheoremstyle{bodyit}
  {3pt}
  {3pt}
  {\itshape}
  {}
  {\bfseries\sffamily}
  {.}
  { }
  {}
}
\newcommand{\THMEN}{%
  \@THMSTYLES
  \theoremstyle{bodyit}
  \newtheorem{thm}{Theorem}[section]%
  \newtheorem{cor}[thm]{Corollary}%
  \newtheorem{prop}[thm]{Proposition}%
  \newtheorem{lem}[thm]{Lemma}%
  \theoremstyle{bodyrm}%
  \newtheorem{defi}[thm]{Definition}%
  \newtheorem{xpl}[thm]{Example}%
  \newtheorem{exo}[thm]{Exercise}%
  \newtheorem{hyp}[thm]{Hypothesis}%
  \newtheorem{eur}[thm]{Heuristics}%
  \newtheorem{pro}[thm]{Problem}%
  \newtheorem{rem}[thm]{Remark}%
  \newtheorem{prp}[thm]{Property}%
}
\newcommand{\THMFR}{%
  \@THMSTYLES
  \theoremstyle{bodyit}
  \newtheorem{thm}{Théorème}[section]%
  \newtheorem{lem}[thm]{Lemma}%
  \theoremstyle{bodyrm}%
  \newtheorem{defi}[thm]{Définition}%
  \newtheorem{rem}[thm]{Remarque}%
  %
}
\newcommand{\SMALLSECS}{%
 \renewcommand{\section}{\@startsection%
  {section}
  {1}
  {0em}
  {\baselineskip}
  {0.5\baselineskip}
  {\normalfont\large\bfseries}}
 \renewcommand{\subsection}{\@startsection%
  {subsection}
  {2}
  {0em}
  {\baselineskip}
  {0.25\baselineskip}
  {\normalfont\bfseries}}
}
\providecommand{\timenow}{\@tempcnta\time
\@tempcntb\@tempcnta
\divide\@tempcntb60
\ifnum10>\@tempcntb0\fi\number\@tempcntb
\multiply\@tempcntb60
\advance\@tempcnta-\@tempcntb
:\ifnum10>\@tempcnta0\fi\number\@tempcnta}
\newcommand{\versiondetravail}{%
 \renewcommand{\@evenfoot}{%
 \hfil{\tiny\texttt{%
   Version préliminaire, compilée le \today{} à \timenow.}\hfill}}%
 \renewcommand{\@oddfoot}{\@evenfoot}%
}
\newcommand{\dR}{\EM{\mathbb{R}}}
\newcommand{\cA}{\EM{\mathcal{A}}}
\newcommand{\cC}{\EM{\mathcal{C}}}
\newcommand{\cN}{\EM{\mathcal{N}}}
\newcommand{\bE}{\EM{\mathbf{E}}}
\newcommand{\bP}{\EM{\mathbf{P}}}
\newcommand{\al}{\alpha}
\newcommand{\be}{\beta}
\newcommand{\ga}{\gamma}
\newcommand{\la}{\lambda}
\newcommand{\te}{\theta}
\newcommand{\ta}{\tau}
\newcommand{\veps}{\varepsilon}
\newcommand{\vphi}{\varphi}
\newcommand{\Det}[1]{\mathrm{Det}\,}
\newcommand{\WH}[1]{\widehat{#1}}
\renewcommand{\leq}{\leqslant}
\renewcommand{\geq}{\geqslant}
\title{Estimation of the distribution of random shifts deformation}
\author{I.~\textsc{Castillo} \& J-M.~\textsc{Loubes}}
\date{}
\newcommand{\mykeywords}{
Semiparametric statistics,
Order two properties,
Penalized Maximum Likelihood,
Practical algorithms.
}
\newcommand{\mysubjclass}{
62G05, 62G20.
}
\newcommand{\uns}{\frac{1}{n}}
\newcommand{\rd}{\sqrt{2}}
\newcommand{\argm}{\operatorname{argmax}}
\newcommand{\Argm}[1]
{\underset{#1}{\argm\ }}
\newcommand{\mymax}[1]
{\underset{#1}{\ }}
\newcommand{\pli}{+\infty}
\begin{document}

\maketitle
{\small
\begin{abstract}
Consider discrete values of functions shifted by unobserved translation effects, which are independent
 realizations of a random variable with unknown distribution $\mu$, modeling the variability in
the response of each individual. Our aim is to construct a nonparametric estimator of the density
of these random translation deformations using semiparametric preliminary estimates of the shifts.
Building on results of Dalalyan et al. (2006), semiparametric estimators are obtained in our
 discrete framework and their performance studied. From these estimates we construct a
nonparametric estimator of the target density. Both rates of convergence and an algorithm to
construct the estimator are provided.\\
\end{abstract}
{ \noindent
 \textbf{Keywords}: \mykeywords \\
 \textbf{Subject Class. MSC-2000}: \mysubjclass}
}   
\section{Introduction}
\vspace{0.5cm}
Our aim is  to estimate the common density $\vphi$ of independent random variables $\theta_j, \: j=1,\dots,J_n$, with distribution $\mu$, observed in a panel data analysis framework in a translation model. More precisely, consider $J_n$ unknown curves $t\to f^{[j]}(t)$  sampled  at multiple points $t_{ij}=t_i=i/n,\: i=1,\dots,n$, with random i.i.d. translation effects $\te_j,\: j=1,\dots,J_n$, in the following regression framework
\begin{equation} \label{eq:pb}
 Y_{ij}= f^{[j]} (t_{ij}-\theta_j)+ \sigma \veps_{ij}~, \: i=1,\dots,n, \: j=1,\dots,J_n,
\end{equation}
where $\veps_{ij}$ are independent standard normal $\cN(0,1)$ random noise and are independent of the $\te_j$'s, while $\sigma$ is a positive real number which is assumed to be known.
The number of points per curve is denoted by $n$ while $J_n$ stands for
the number of curves.\vskip .1in
Equation \eqref{eq:pb} describes the situation  often encountered in biology, data mining or econometrics (see e.g \cite{Ronn} or \cite{Loubes04b})  where the outcome of an experiment depends on a random variable $\theta$ which models the case where the  data  variations take into account the variability of each individual:  each subject $j$ can react in a different way within a mean behaviour, with slight variations given by the unknown curves $f^{[j]}$. Estimating $\vphi$, the density of the unobserved $\theta_j$'s, enables to understand this mean behaviour.\vskip .1in
 Nonparametric estimation of $\vphi$ belongs to the class of inverse problems for which the subject of the inversion is a probability measure, since the  realizations $\theta_j$ are warped by  unknown functions $f^{[j]}$'s. Here, these functions  are unknown, hence the underlying inverse problem becomes more than harmful as  sharp approximations of the $\theta_j$'s are needed to prevent flawed rates of convergence for the density estimator.  While the estimation of  parameters, observed through their image by an operator, traditionally relies on the inversion of the operator, here the repetition of the observations enables to use recent advances in semiparametric estimation to improve the usual strategies developed to solve such a
  problem.\\
\indent Note that estimation of such warping parameters have been investigated by several authors using nonparametric methods for very general models,  see for instance \cite{Gasser95,Kneip92}, or  \cite{Wang99}. However little attention is paid to the law
     of these random parameters. Moreover, as said previously, sharp estimates of the parameters are required to achieve density estimation, which requires semiparametric methods.\vskip .1in
 Our approach consists, first, in the estimation of the shifts $\theta_j$ while the functions $f^{[j]}$ play the role of nuisance parameters. We follow the semiparametric approach introduced in \cite{Dalalyan03} in the Gaussian white noise framework and extend it to the discrete regression framework. This provides sharp estimators of the unobserved shifts, up to order 2 expansions. Alternative methods can be found in \cite{Loubes04} or \cite{Vimond}. These preliminary estimates enable, in a second time,  to recover the unknown density $\varphi$ of the
$\theta_j$'s  as if the shifts were directly observed, at least if $J_n$ is not significatively larger than $n$.
This paper also provides a practical algorithm, for both the semiparametric and the nonparametric steps. The first step is the most difficult one: to build practicable semiparametric estimators, we propose an algorithm which refines the one proposed in \cite{LLL04}
for the period model and relies on the previously obtained second order expansion.\\
\indent Beyond the shift estimation case, which involves a symmetry assumption on $f^{[j]}$, our procedure may be applied to semiparametric
models where an explicit penalized profile likelihood is available and well-behaved estimators of the $\theta_j$'s can be obtained.
 A particularly important example in applications is the
 estimation of the period of an unknown periodic function, see for instance \cite{LLL04}. Given a sequence of $J_n$ experiments like the one considered
in \cite{LLL04}, one might be interested in estimating the law of the corresponding periods of the signals.
 In this case one can also consider applying our method, under some conditions made explicit in the sequel.
 \vskip .1in The paper falls into the following parts. In Section \ref{siden}, semiparametric estimators $\widehat{\theta}_j$
of the realizations of the shift parameters  are proposed, and sharp bounds between $\widehat{\theta}_j$ and
$\theta_j$ are provided. Then, in Section \ref{npara}, a nonparametric estimator of the unknown distribution is considered while rates of convergence
 are provided in the case where $\mu$ admits a density, in the general model \eqref{eq:pb} under the
 condition that the $\te_j$'s can be sufficiently
 well approximated. In Section 4, the practical estimation problem is considered and
 a simulation study is conducted. Technical proofs are gathered in Section \ref{sappen}.

\section{Semiparametric Estimation of the shifts} \label{siden}
 In this Section, we provide, for each fixed $j$, semiparametric estimators of the $j^{\rm th}$ realization $\theta_j$
of the random variable $\theta$, observed in Model \eqref{eq:pb}. To build this estimates, we follow the method introduced by Dalalyan, Golubev and Tsybakov in \cite{Dalalyan03} for a continuous-time version of the translation model. We obtain analogues of two of their results in our discrete-time model: a deviation estimate stated in Lemma  \ref{tsy} and a second order expansion for the estimators stated in Lemma \ref{lem:tsy2}. We also establish a new result in Lemma \ref{espcond}, which enables to control the bias of the estimates.  The particular form of the estimators and the second order expansion will not be used to build the density estimator in Section \ref{npara}. \\

Hence, conditionally to the event $\theta_j=\theta$, we construct an estimator $\WH{\theta}_j$ and establish asymptotic results for the conditional distribution $\left(\WH{\theta_j}\ |\ \theta_j=\theta\right)$,  gathered in Lemmas \ref{tsy}, \ref{lem:tsy2} and \ref{espcond}. In the remaining of this Section, since $j$ is fixed, the index $j$  in the notation is dropped (for instance $Y_{ij}$ is simply denoted by $Y_i$).  We shall denote by $\|.\|$ the $L^2$-norm on $[0,1]$ and by $\|.\|_{\infty}$  the $L^{\infty}$-norm on $\dR$.

\subsection{Shift estimation in the discrete time translation model}
 The model reduces to, assuming for simplicity that $\sigma=1$,
\begin{equation} \label{mods}
 Y_{i}=f(t_{i}-\theta)+\veps_{i}\quad\quad i=1,\ldots,n,
\end{equation}
where $f$ is a {\em symmetric} function satisfying some additional assumptions detailed below
 and $t_i=i/n$.  The corresponding problem is the one of  semiparametric estimation of the center of symmetry
 in a discrete framework.

\begin{description}
\item[{Working assumptions in the translation model}]
\end{description}
We assume that the support $\Theta$ of the distribution $\mu$ of the random variable $\theta$ is compact and contained in an interval of diameter upper-bounded by $1/2$
\begin{align*}
(A1) & \quad  \Theta=\{\te,\ |\te|\leq\ta_{0}\}, \quad \text{where } \ta_{0}\ \ \text{is such that\ }
0<\ta_{0}<1/4.
\intertext{The function $f$ is assumed to be symmetric (that is, $f(x)=f(-x)$ for all real $x$) and periodic with period 1 with Fourier coefficients denoted by $f_k,\: k\geq 1$,}
 (A2) & \quad f(t) =\rd\sum_{k\geq 1}f_{k}\cos(2\pi kt),\quad\text{where}\
\ f_{k}=\rd\int_{0}^{1}f(t)\cos(2\pi kt)dt.
\intertext{Let $\mathcal{C}^2(\dR)$ denote the set of all twice continuously differentiable functions
on $\dR$. We assume that there exist $\rho>0$  and $C_0<\pli$ such
that $f$ belongs to the set $F$ defined by}
(A3) & \quad F =F(\rho,C_0)=\{f\in\mathcal{C}^2(\dR),\quad f_{1}^2\geq\rho,\quad \|f''\|^2\leq C_{0}\}.\\
\end{align*}
Conditions (A1)-(A3) can be seen as working assumptions and are essentially the same as in \cite{Dalalyan03}. Assuming periodicity of $f$ is not a drawback since, in practice, the function $f$ is compactly supported and can easily be periodicized. The assumption that $f$ belongs to $\cC^2(\dR)$ is handy in particular for proving the second order properties of the estimator. Note also that for simplicity in the definitions of the classes, as in \cite{Dalalyan03}  we have assumed that the Fourier coefficient
for $k=0$, that is $\int_0^1 f(u)du$, is zero.\\
\\
Identifiability in model (2) follows from : symmetry, 1-periodicity of the functions
(note that assuming that $f_1^2\geq\rho$ implies that $f$ cannot be periodic of
smaller period) and the fact that the diameter of $\Theta$ is less than $1/2$. \\
\\
Note also that within this framework, the Fisher information for estimating $\te_j$ for a fixed $j$ is, as $n$
tends to $\pli$, given by $\{1+o(1)\}n\|f'\|^2$.
\begin{description}
\item[{Construction of the estimator}]
\end{description}
Let us define an estimator $\WH{\te}$ of the shift $\theta$ in model \eqref{mods} by
\begin{equation} \label{definit}
\WH{\te}=\Argm{\tau\in\Theta}\ \sum_{k\geq 1}h_{k}\left(\uns \sum_{i=1}^{n}\cos(2\pi
k(t_{i}-\ta))Y_{i}\right)^2,
\end{equation}
 where $(h_{k})$ is a sequence of real numbers in $[0,1]$ satisfying some conditions
made precise in the following subsection. The sequence $(h_{k})$ is called {\em sequence of weights} or {\em filter}.

The estimator $\WH{\te}$ is similar to the estimator $\WH{\theta}_{PML}$ proposed in \cite{Dalalyan03}:
here the integral in their definition is replaced by the equivalent discrete sum in the
discrete-time model. 
As we sketch below, the estimator \eqref{definit} arises in a natural way by using a penalized profile likelihood method as in \cite{Dalalyan03}, though here in an approximate way only, 

First one turns the study of the regression model into the study of a sequence of independent submodels. Let us introduce, for any $k\geq 1$,
\begin{eqnarray*}
x_{k}=\uns\sum_{i=1}^{n}\rd\cos(2\pi k t_{i})Y_{i}, &
\quad \xi_k=\frac{1}{\sqrt{n}}\sum_{i=1}^{n}\rd\cos(2\pi k t_{i})\veps_{i}.\\
x_{k}^{*}=\uns \sum_{i=1}^{n}\rd\sin(2\pi k t_{i})Y_{i}, &
\quad \xi_k^*=\frac{1}{\sqrt{n}}\sum_{i=1}^{n}\rd\sin(2\pi k t_{i})\veps_{i}.\\
\end{eqnarray*}
Note that $x_k$ and $x_k^*$ are observed. Using the fact that $Y_i$ follows \eqref{mods},
\begin{equation}\label{proj}
x_{k}=\cos(2\pi k\te)f_{k}+d_{k,n}+\frac{1}{\sqrt{n}}\xi_{k},
\end{equation}
\begin{equation}\label{proj*}
x_{k}^{*}= \sin(2\pi k\te)f_{k}+d_{k,n}^{*}+\frac{1}{\sqrt{n}}\xi_{k}^{*},
\end{equation}
where $d_{k,n}, d_{k,n}^{*}$ 
 are terms of difference between the Fourier coefficient and its approximation:
$$d_{k,n}=\sqrt{2}\left(\uns\sum_{i=1}^{n}\cos(2\pi kt_{i})f(t_{i}-\te)-
\int_{0}^{1} \cos(2\pi k t)f(t-\te)dt \right).$$
The term $d_{k,n}^{*}$ is obtained in a
similar way replacing the cosine by a sine. We would like to underline two important facts about the previous quantities. First, since the $\veps_i$'s are Gaussian $\cN(0,1)$, the variables $(\xi_{k},\xi_{k}^{*})_{k\geq 1}$
are also Gaussian and, since we assume that $t_i=i/n$, using the orthogonality
of the trigonometric basis over this system of points, they are in fact independent standard Normal.
Second, it is important to note that both $d_{k,n}$ and $d_{k,n}^{*}$ are non-random and bounded uniformly in $\te$. We will use more precise bounds in function of $k$ and $n$ in the proofs, see Lemma \ref{lemapprox} in Section
\ref{sappen}.



The penalized profile likelihood method is as follows. For each integer $k$ and $\ta\in\Theta$, 
 let us define the quantity $ p_{\ta}(x_{k},x_{k}^{*},f_{k})$ as
 $$\left(\frac{1}{\sqrt{2\pi}}\right)^3\exp\left(-\frac{n}{2}(x_{k}-\cos(2\pi
 k\ta)f_{k}-d_{k,n})^2-\frac{n}{2}(x_{k}^{*}-\sin(2\pi
 k\ta)f_{k}-d_{k,n}^{*})^2-\frac{f_{k}^2}{2\sigma_{k}^2}\right).$$ This is the usual likelihood corresponding to
 the observation $(x_k,x_k^*)$ with an additional penalization term $-f_k^2/2\sigma_k^2$, where $\sigma_k$ has to
 be chosen. The profile likelihood technique (see \cite[Chap. 25]{vdv}), consists in "profiling out" the
 nuisance parameter $f_k$ by setting
 \begin{align*}
 f_k^*(\ta)&=\Argm{f_k}p_{\ta}(x_{k},x_{k}^{*},f_{k})\\
 \WH{\te}_{PML}&=\Argm{\ta\in\Theta}\prod_{k\geq 1} p_{\ta}(x_{k},x_{k}^{*},f_{k}^*(\ta)).
 \end{align*}
Here a difficulty is that $d_{k,n}$ and $d_{k,n}^*$ depend on $f_k$. However, if we neglect those terms, we can follow the calculations made in \cite{Dalalyan03} and we obtain that $\WH{\te}_{PML}$ is the maximizer of
$$\sum_{k\geq 1}h_{k}\left(\uns \sum_{i=1}^{n}\cos(2\pi k\ta)x_k + \sin(2\pi k\ta)x_k^*\right)^2,$$
which is exactly the same as \eqref{definit} if we set $h_k=\sigma_{k}^2/(\sigma_{k}^2+n^{-1})$. Thus the criterion \eqref{definit} can be obtained by an {\em approximate} profile likelihood method. Yet, it is not trivial to see at this point if having neglected the terms $d_{k,n}, d_{k,n}^*$ in the construction of the estimator can have a negative influence over the behavior of the criterion \eqref{definit}. In fact, we will see in the sequel that this is not the case, and that \eqref{definit} can lead to a very good estimator, even at second order, provided a sensible choice of $(h_k)$ is made.

\subsection{Asymptotic behavior of the shifts estimators} \label{sestim}
First let us precise some technical assumptions we make on the sequence of weights $(h_k)$ in the definition
 \eqref{definit} of the estimator. These are the same as Assumptions B and C in \cite{Dalalyan03}, except that here
we also restrict ourselves to a finite number of nonzero weights.
\begin{description}
\item[Assumptions on the sequence of weights $(h_k)$]
\end{description}
Let the sequence $(h_{k})$ be such that $h_{1}=1$, $0\leq h_{k}\leq 1$ for all
$k\geq 1$ and assume that there are positive constants $D_{1}$ and $\rho_{1}$ such that
\begin{align*}
(C1)\quad& \text{The number of weights such that}\ h_{k}\neq 0\ \text{is finite}. \\
(C2)\quad&  \left[\sum_{k\geq 1} (2 \pi k)^2 h_k^2\right]^{1/2} \geq\rho_{1}(\log^2{n})\max_{k\geq 1}(2\pi k)h_{k}.\\
(C3)\quad& \sum_{k\geq 1}h_{k}(2\pi k)^4\leq D_{1}n.\\
(T)\quad & \left(\sum_{k\geq 1}(1-h_{k})(2\pi k)^2f_{k}^2\right)^2=
o\left(\sum_{k\geq 1}(1-h_{k})^2(2\pi k)^2 f_{k}^2\right).
\end{align*}
The first condition is quite natural to make the estimator feasible in practice, Conditions (C2) and (C3) precise the range of the sequence $(h_{k})$. Condition (T) allows to obtain second order properties, see the proof of
Lemma~\ref{lem:tsy2}.\\
\begin{rem} As noted in \cite{Dalalyan03}, conditions (C1), (C2), (C3) and (T) are fulfilled for a quite
wide range of weights. For instance, the sequences $(h_{k}={\bf 1}_{1\leq k\leq N(T)})$, also called projection
weights, satisfy the preceding conditions since (C2) and (C3) are satisfied respectively for $N(T)\geq
C\log^4{n}$ and $N(T)\leq Cn^{1/5}$, while condition (T) is always satisfied for projection weights since
$\sum_{k\geq N(T)}(2\pi k)^2f_{k}^2\rightarrow 0$, as $n\rightarrow +\infty$, due to (A3).\\
\end{rem}
\begin{rem}
It is also be possible to consider random, data-driven, weights. This approach is considered in \cite{Dalalyan07}.
\end{rem}
\begin{description}
\item[Asymptotic properties]
\end{description}
For easiness of reference in Section 3, it is convenient here to make the dependence in $j$ explicit again. In the following Lemmas, $f^{[j]'}$ and $f^{[j]}_k$ respectively denote the derivative and the Fourier coefficients of $f^{[j]}$. Lemmas \ref{tsy} and \ref{lem:tsy2} are respectively analogues of Lemma 5 and
Theorem 1 in \cite{Dalalyan03} here in a discrete regression framework, which seems to be closer to practical applications.  Though it seems natural that the cited results extend to our context, it is not obvious that the extra terms induced by the discretization of the model, for instance the $d_{n,k}$'s introduced above, do not
interfere with the rates, in particular at the second order. But we prove that they eventually do not,
see the proof of the Lemmas in Section \ref{sappen}.
\begin{lem}[Deviation bound]\label{tsy}
Assume that $\operatorname{(A), (C), (T)}$ are fulfilled. For any $K>0$ and any positive integer $n$, denote $x_{n}=K\sqrt{\log{n}}$. There exist positive constants  $c_1,\: c_2$ such that for any $K>0$, for $n$ large enough,  uniformly in $j\in\{1,\dots,J_n\}$, $\theta_j \in
\Theta$ and $f^{[j]}\in F$, it holds
\begin{equation} \label{optbound} \bP \left( \sqrt{n}|\WH{\theta}_{j} - \theta_j| > x_{n} |\theta_j \right)
 \leq c_1 \exp(-c_2 x_{n}^2).
\end{equation}
\end{lem}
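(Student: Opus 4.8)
The plan is to view $\WH{\theta}$ as the maximizer of the random criterion
\[
M_n(\tau)=\tfrac12\sum_{k\geq 1}h_k\PAR{\cos(2\pi k\tau)\,x_k+\sin(2\pi k\tau)\,x_k^*}^2,
\]
which, up to the irrelevant factor $\tfrac12$, is \eqref{definit} after expanding $\cos(2\pi k(t_i-\tau))$ and using the definitions of $x_k,x_k^*$. Substituting the decompositions \eqref{proj}, \eqref{proj*}, each bracket splits as $f_k\cos(2\pi k(\tau-\theta))+\delta_k(\tau)+n^{-1/2}\eta_k(\tau)$, where $\delta_k(\tau)=\cos(2\pi k\tau)d_{k,n}+\sin(2\pi k\tau)d_{k,n}^*$ is the nonrandom discretization bias and $\eta_k(\tau)=\cos(2\pi k\tau)\xi_k+\sin(2\pi k\tau)\xi_k^*$ is a Gaussian term. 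This isolates the deterministic signal $S(\tau)=\tfrac12\sum_k h_k f_k^2\cos^2(2\pi k(\tau-\theta))$, uniquely maximized at $\tau=\theta$, and I would organize everything around the basic M-estimation inequality: since $M_n(\WH\theta)\geq M_n(\theta)$, writing $\nu_n=M_n-S$ for the bias-plus-noise fluctuation one gets $S(\theta)-S(\WH\theta)\leq\nu_n(\WH\theta)-\nu_n(\theta)$.

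Second, I would quantify the separation of the maximum. From $S(\theta)-S(\tau)=\tfrac12\sum_k h_k f_k^2\sin^2(2\pi k(\tau-\theta))$ and $h_1=1$ I retain only the $k=1$ term, so $S(\theta)-S(\tau)\geq\tfrac12 f_1^2\sin^2(2\pi(\tau-\theta))$. Because (A1) forces $|\tau-\theta|\leq 2\tau_0<1/2$, the argument $2\pi(\tau-\theta)$ stays in $(-\pi,\pi)$, where $\sin^2$ vanishes only at $\tau=\theta$; combined with $f_1^2\geq\rho$ from (A3) this yields a curvature bound $S(\theta)-S(\tau)\geq\kappa\min(|\tau-\theta|^2,c)$ with constants $\kappa,c>0$ depending only on $\rho,\tau_0$, uniformly over $f\in F$ and $\theta\in\Theta$. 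This is exactly where identifiability (symmetry together with the diameter restriction on $\Theta$) enters, and it is what makes the constants $c_1,c_2$ uniform in $j$.

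Third, I would run a peeling argument. On the event $\{\sqrt n|\WH\theta-\theta|>x_n\}$ there is a $\tau$ with $|\tau-\theta|>x_n/\sqrt n$ at which $M_n(\tau)\geq M_n(\theta)$, hence $\kappa\min(|\tau-\theta|^2,c)\leq\nu_n(\tau)-\nu_n(\theta)$. Slicing $\{|\tau-\theta|>x_n/\sqrt n\}$ into dyadic shells $2^{\ell-1}x_n/\sqrt n<|\tau-\theta|\leq 2^\ell x_n/\sqrt n$, it suffices to show that on each shell $\sup|\nu_n(\tau)-\nu_n(\theta)|$ is, with overwhelming probability, smaller than the separation $\kappa\,2^{2(\ell-1)}x_n^2/n$ (resp.\ $\kappa c$ in the far shells). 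Summing the resulting tail bounds over $\ell$ gives a geometric series dominated by its first term, producing the claimed $c_1\exp(-c_2 x_n^2)$; since $x_n^2=K^2\log n$ this is the polynomial rate $n^{-c_2K^2}$, which is why a $\log$-scale threshold is the natural one here.

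Finally, the crux is controlling the increments of $\nu_n$, which split into a term linear in the Gaussians $(\xi_k,\xi_k^*)$, a quadratic chi-square-type term $\tfrac1{2n}\sum_k h_k[\eta_k(\tau)^2-\eta_k(\theta)^2]$, and deterministic bias cross-terms. The linear part is a Gaussian process in $\tau$ whose increments I would control by differentiating in $\tau$ — which produces the factors $(2\pi k)$ — and bounding the variance of the derivative through $\sum_k h_k(2\pi k)^4$; this is where (C3) is used, and a Borell-type Gaussian maximal inequality then gives sub-Gaussian tails on each shell. The quadratic part is the main obstacle: it must be handled uniformly in $\tau$ by a Hanson--Wright/Gaussian-chaos concentration bound, and this is precisely where (C2) enters, its $\log^2 n$ safety factor ensuring that no single frequency dominates so that the chaos deviation inequality survives the union bound over a net of each shell and still yields the decay $n^{-c_2K^2}$. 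The bias terms $\delta_k(\tau)$ are dispatched using the uniform boundedness of $d_{k,n},d_{k,n}^*$ noted after \eqref{proj*}, refined by Lemma \ref{lemapprox}, which shows they are of smaller order than the separation and cannot displace the maximizer.
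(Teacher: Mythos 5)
Your proposal is correct in its architecture and reaches the statement by the same overall decomposition as the paper: the criterion is split into a deterministic signal, a term linear in the Gaussians, a Gaussian quadratic term, and discretization-bias terms; the maximum is separated quadratically using only the frequency $k=1$ together with $h_1=1$, $f_1^2\geq\rho$ and the diameter restriction in (A1); and the basic M-estimation inequality converts the deviation event into a comparison between this separation and the fluctuation of the noise. Where you genuinely diverge from the paper is in the device used to make the stochastic control uniform in $\tau$. The paper (following Dalalyan et al.) never peels: it writes $L=\eta_0+2n^{-1/2}\|f'\|\eta_1+n^{-1}\eta_2$, bounds the increments by $|\tau-\theta|\sup_{t\in\Theta}|\eta_1'(t)|$ and $|\tau-\theta|\sup_{t\in\Theta}|\eta_2'(t)|$, and then controls the two global suprema once and for all --- $\sup|\eta_1'|$ by the Rice formula for the stationary Gaussian process $\eta_1'$ (whose variance is bounded below via $\WH{f}_1$ and whose second-derivative variance is bounded via (A3)/(C3)), and $\sup|\eta_2'|$ by importing Lemma 3 of Dalalyan et al., which is where (C2) enters. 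Because the separation is quadratic while these bounds are linear in $|\tau-\theta|$, dividing by $|\tau-\theta|>x_n/(\sqrt{n}\|f'\|)$ handles all scales simultaneously, so the dyadic shells, the nets, and the Hanson--Wright step of your plan are replaced by two ready-made supremum bounds. Your peeling-plus-chaining route is self-contained and would work (your accounting of where (C2) and (C3) are needed is accurate), at the cost of redoing the chaos concentration that the paper simply cites. Two small points of bookkeeping: the paper keeps the discretized coefficients $\WH{f}_k,\WH{g}_k$ inside the deterministic term $\eta_0$ and shows the cross terms are $O(1/n)$ uniformly via Lemma \ref{lemapprox} (rather than folding them into the fluctuation as you do), and the resulting $O(n^{-1})/|\tau-\theta|^2$ correction is absorbed precisely because $|\tau-\theta|^2\geq x_n^2/(n\|f'\|^2)$ with $x_n^2=K^2\log n\to\infty$; your claim that the bias is of smaller order than the separation relies on this same logarithmic margin and should be stated as such.
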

\vskip.1in
\begin{lem}[Second order Expansion] \label{lem:tsy2} Assume that $\operatorname{(A), (C), (T)}$ are fulfilled. Let us denote  $R^n[h,f^{[j]}]=\sum_{k=1}^\infty (2\pi k)^2[(1-h_k)^2f^{[j]\,2}_k+h_k^2/n]$.
 Uniformly in $j\in\{1,\dots,J_n\}$, $\theta_j \in
\Theta$ and $f^{[j]}\in F$, as $n$ tends to $+\infty$,
 \begin{equation} {\bf E}\left((\WH{\theta}_{j}-\theta_j)^2|\theta_j\right) = \frac{1}{n \|f^{[j]'}\|^2} \left( 1+(1+o(1))
 \frac{R^n[h,f^{[j]}]}{\|f^{[j]'}\|^2} \right). \label{o2}\\
 \end{equation}
\end{lem}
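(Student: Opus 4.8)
The plan is to treat $\WH\theta$ as an M-estimator and run a second-order (delta-method) expansion of its defining first-order condition. Fix $j$ and work conditionally on $\theta_j=\theta$, writing $f=f^{[j]}$, $f_k=f^{[j]}_k$ and suppressing the index $j$; under this conditioning the noise $(\veps_i)$ is independent of $\theta$, so everything reduces to the fixed-shift model \eqref{mods}. Expanding $\cos(2\pi k(t_i-\tau))$ shows that \eqref{definit} maximises, up to a constant factor irrelevant for the argmax, the smooth criterion $M_n(\tau)=\sum_{k\geq1}h_k\,u_k(\tau)^2$ with $u_k(\tau)=\cos(2\pi k\tau)x_k+\sin(2\pi k\tau)x_k^*$. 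First I would invoke Lemma~\ref{tsy}: on $\cA_n=\{\sqrt n\,|\WH\theta-\theta|\leq x_n\}$, $x_n=K\sqrt{\log n}$, the maximiser is interior for $n$ large (since $|\theta|<\tau_0<1/4$), so the first-order condition $M_n'(\WH\theta)=0$ holds; on $\cA_n^c$ one has $(\WH\theta-\theta)^2\leq(2\tau_0)^2\leq1$ and $\bP(\cA_n^c)\leq c_1n^{-c_2K^2}$, whence $\bE[(\WH\theta-\theta)^2\mathbf 1_{\cA_n^c}\,|\,\theta]=o(1/n)$ uniformly for $K$ large. Thus it suffices to analyse the MSE on $\cA_n$, where the Taylor expansion is licit.

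Next I would Taylor-expand the score at $\theta$, writing $\WH\theta-\theta=-M_n'(\theta)/M_n''(\theta)+(\text{third-order remainder})$, and substitute \eqref{proj}--\eqref{proj*}. Using the orthogonality of the trigonometric system so that $\eta_k=\cos(2\pi k\theta)\xi_k+\sin(2\pi k\theta)\xi_k^*$ and $\zeta_k=-\sin(2\pi k\theta)\xi_k+\cos(2\pi k\theta)\xi_k^*$ form, over $k\geq1$, an i.i.d.\ standard Gaussian family, a direct computation gives, up to the discretisation pieces built from $d_{k,n},d_{k,n}^*$,
\[
M_n'(\theta)=\tfrac{2}{\sqrt n}\sum_k h_k(2\pi k)f_k\zeta_k+\tfrac{2}{n}\sum_k h_k(2\pi k)\eta_k\zeta_k+(\text{disc.}),\qquad \bE\big[M_n''(\theta)\,\big|\,\theta\big]=-2\La_n+(\text{disc.}),
\]
where $\La_n=\sum_k h_k(2\pi k)^2f_k^2\to\|f'\|^2$ (condition (T) forces $\sum_k(1-h_k)(2\pi k)^2f_k^2\to0$). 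Hence the leading term is $\WH\theta-\theta\approx M_n'(\theta)/(2\La_n)$.

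I would then compute $\bE[M_n'(\theta)^2\,|\,\theta]=\tfrac4n\sum_k h_k^2(2\pi k)^2f_k^2+\tfrac{4}{n^2}\sum_k h_k^2(2\pi k)^2+\cdots$, the cross term vanishing because the $\eta$'s are independent of the $\zeta$'s. Dividing by $4\La_n^2$, the first summand yields, through the elementary expansion
\[
\frac{\sum_k h_k^2(2\pi k)^2f_k^2}{\La_n^2}=\frac{1}{\|f'\|^2}\Big(1+\frac{\sum_k(1-h_k)^2(2\pi k)^2f_k^2}{\|f'\|^2}+\cdots\Big),
\]
the leading $1/(n\|f'\|^2)$ together with the $(1-h_k)^2f_k^2$ part of $R^n[h,f]$, while the second summand produces exactly the $h_k^2/n$ part. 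Condition (T) enters precisely here: the neglected higher-order terms of the geometric expansion are of size $\big(\sum_k(1-h_k)(2\pi k)^2f_k^2\big)^2$, which by (T) is $o\big(\sum_k(1-h_k)^2(2\pi k)^2f_k^2\big)$ and so is absorbed into the $(1+o(1))$ factor.

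It then remains to show, \emph{uniformly} in $j,\theta,f$, that what I pushed into the remainder is negligible relative to $R^n[h,f]/(n\|f'\|^2)$. Three contributions need control: (i) the fluctuation $\Delta=M_n''(\theta)-\bE[M_n''(\theta)\,|\,\theta]$, for which $\bE[\Delta^2\,|\,\theta]=O(1/n)$ by (C3) and $\|f''\|^2\leq C_0$, and whose leading coupling $\bE[M_n'(\theta)^2\Delta\,|\,\theta]$ again vanishes by the $\eta\perp\zeta$ factorisation, so the information randomness is negligible; (ii) the third-order Taylor remainder, bounded on $\cA_n$ via $(\WH\theta-\theta)^2\leq x_n^2/n$ together with (C2)--(C3)—this is exactly where the $\log^2n$ in (C2) is needed to dominate the $\log n$ inflation carried by $x_n$; and (iii) the discretisation terms $d_{k,n},d_{k,n}^*$, whose effect on the bias is handled by Lemma~\ref{espcond} and whose pointwise magnitude is handled by Lemma~\ref{lemapprox}. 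I expect (iii) to be the main obstacle and the genuinely new point relative to \cite{Dalalyan03}: one must verify that discretisation creates neither a bias nor a variance term of order comparable to $R^n[h,f]$, which requires the $k$- and $n$-dependent bounds of Lemma~\ref{lemapprox} rather than the crude uniform bound, and that these errors stay negligible uniformly over $f\in F$ and $\theta\in\Theta$.
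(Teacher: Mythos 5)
Your proposal is correct and follows essentially the same route as the paper: linearize the score equation $L'(\WH{\theta})=0$ around $\theta$ on the high-probability event supplied by Lemma~\ref{tsy}, compute $\bE(L'(\theta)^2)$ and $\bE(L''(\theta))$ with the discretized Fourier coefficients (the paper's Lemma~\ref{lemel}, which rests on Lemmas~\ref{lemapprox} and~\ref{lemhk}), invoke condition (T) to absorb the higher-order terms of the expansion, and bound the curvature fluctuation and the third-order Taylor remainder on that event. The only point to repair is your appeal to Lemma~\ref{espcond} for the discretization bias, which would be circular since that lemma's proof itself uses the present second-order expansion; the paper instead controls the $d_{k,n}$-type terms directly via the $(k/n)\wedge 1$ bounds of Lemma~\ref{lemapprox} inside Lemma~\ref{lemel}, exactly as you do for their "pointwise magnitude".
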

Lemma \ref{lem:tsy2} implies that, conditionally to $\theta_j$, the estimator $\WH{\theta}_j$ is efficient for estimating $\theta_j$ at the first order. It also provides an explicit form
 for the second order term of the quadratic risk.
 The explicit expression of this term is not needed  to  establish the convergence rate of the plug-in estimator in Section \ref{npara}. Nevertheless it justifies the choice of the filter made in Section \ref{ssimul}. Indeed, we see from \eqref{o2} that an appropriate filter $(h_k)$ is a
  filter such that $R^n[h,f^{[j]}]$ is as small as possible. \\

The following Lemma \ref{espcond} is new with respect to \cite{Dalalyan03}. It ensures that the conditional
  law of $\WH{\te}_j$ is centered at $\te_j$, up to a $O(\log{n}/n)$ term.\\

\begin{lem}[Asymptotical Bias]\label{espcond}
Assume $\operatorname{(A), (C), (T)}$ and that there exists a positive constant $D$ such that for any $f\in F$, it holds $\sum_{k\geq 1} k^2 |f_k| \leq D$.  Then, uniformly in $j\in\{1,\dots,J_n\}$, as $n$ tends to $+\infty$,
\begin{equation} \label{eq:espcond}
\bE\left((\WH{\te}_j-\te_j) | \te_j \right) = O\left(\frac{\log{n}}{n}\right).\\
\end{equation}
\end{lem}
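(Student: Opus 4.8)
The goal is to show that the conditional bias of $\WH{\te}_j$ is of order $\log n/n$, which is smaller than the $1/\sqrt n$ fluctuation scale established in Lemma~\ref{tsy}. The natural strategy is to exploit the first-order optimality condition satisfied by the maximizer $\WH{\te}_j$ and then take conditional expectations, using the deviation bound to restrict attention to a $\log n/\sqrt n$ neighbourhood of $\te_j$.

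My plan is as follows. First I would write the contrast function from \eqref{definit} as $M_n(\ta)=\sum_k h_k U_k(\ta)^2$ where $U_k(\ta)=\uns\sum_i \cos(2\pi k(t_i-\ta))Y_i$, and use the projections \eqref{proj}--\eqref{proj*} to express $U_k(\ta)$ in terms of the signal $\cos(2\pi k(\te-\ta))f_k$, the discretization terms $d_{k,n},d_{k,n}^*$, and the Gaussian noise $\xi_k,\xi_k^*$. Since $\WH{\te}_j$ maximizes $M_n$ over the interior (by Lemma~\ref{tsy}, $\WH{\te}_j$ is within $O(\sqrt{\log n/n})$ of $\te_j$, hence interior to $\Theta$ with overwhelming probability), it satisfies $M_n'(\WH{\te}_j)=0$. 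A Taylor expansion of $M_n'$ around the true value $\te_j$ gives
\begin{equation*}
 0 = M_n'(\te_j) + M_n''(\te_j)(\WH{\te}_j-\te_j) + \tfrac12 M_n'''(\xi)(\WH{\te}_j-\te_j)^2,
\end{equation*}
so that, solving,
\begin{equation*}
 \WH{\te}_j-\te_j = -\frac{M_n'(\te_j)}{M_n''(\te_j)} - \frac{M_n'''(\xi)}{2M_n''(\te_j)}(\WH{\te}_j-\te_j)^2 .
\end{equation*}
The bias then splits into the expectation of the linearized term $-M_n'(\te_j)/M_n''(\te_j)$ and the expectation of the quadratic remainder.

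Next I would treat the two pieces. For the leading term, $M_n'(\te_j)=2\sum_k h_k U_k(\te_j)U_k'(\te_j)$; substituting the projections, the purely stochastic contribution is linear in the centered Gaussian variables $(\xi_k,\xi_k^*)$ and would have mean zero \emph{if} the denominator were deterministic. The nonzero bias comes from two sources: the correlation between the numerator and the random denominator $M_n''(\te_j)$, and the deterministic discretization terms $d_{k,n},d_{k,n}^*$. The new hypothesis $\sum_k k^2|f_k|\le D$ is exactly what is needed to control these: by Lemma~\ref{lemapprox} the $d_{k,n}$ are $O(k^2/n)$ (or similar), so $\sum_k k^2|f_k|\cdot\|d\|$-type sums are $O(1/n)$, contributing at the required order. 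I would replace $1/M_n''(\te_j)$ by its deterministic equivalent $1/(2n\|f^{[j]'}\|^2\{1+o(1)\})$ plus a fluctuation, expand, and show the cross-terms contribute $O(\log n/n)$. For the quadratic remainder, I would bound $|M_n'''|/|M_n''|$ uniformly (using (C3) to control the derivatives of $M_n$ via $\sum_k h_k(2\pi k)^4\le D_1 n$) and use $\bE[(\WH{\te}_j-\te_j)^2|\te_j]=O(1/n)$ from Lemma~\ref{lem:tsy2}, giving a contribution of order $1/n$ on the good event; off the good event, the deviation bound \eqref{optbound} together with the boundedness of $\WH{\te}_j-\te_j$ (both lie in $\Theta$, of diameter $\le 1/2$) makes the contribution exponentially small.

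The main obstacle, I expect, is the careful treatment of the linear term's expectation, specifically the interaction between the Gaussian numerator and the random denominator $M_n''(\te_j)$: a naive splitting $\bE[N/D]\ne\bE[N]/\bE[D]$ and one must quantify $\bE[N(1/D-1/\bE D)]$. The clean way is to expand $1/M_n''(\te_j)$ to first order around its mean, so that the cross-term becomes a third moment of Gaussians weighted by the $h_k$, which one estimates using (C2)--(C3); the $\log n$ factor in the final bound should trace back to condition (C2), which governs the ratio controlling these fluctuations. Assembling these estimates and confirming that every contribution is $O(\log n/n)$ uniformly in $j$, $\te_j\in\Theta$ and $f^{[j]}\in F$ completes the proof.
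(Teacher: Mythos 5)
Your overall strategy --- Taylor-expand the first-order condition $M_n'(\WH{\te}_j)=0$ around $\te_j$, isolate a linear term whose mean is driven by the discretization terms, and kill the remainder on the good event from Lemma~\ref{tsy} --- is the same skeleton as the paper's proof, and you correctly identify that the hypothesis $\sum_k k^2|f_k|\leq D$ is there to make $\bE(L'(\te))=2\sum_k h_k(2\pi k)\WH{f_k}\WH{g_k}=O(1/n)$ via Lemma~\ref{lemapprox}. The real divergence is how the second derivative is handled, and here the paper's route is both cleaner and safer. You solve for $\WH{\te}_j-\te_j$ by dividing by the \emph{random} $M_n''(\te_j)$, which forces you to control $\bE[N/D]$ with a denominator that is not bounded away from zero pathwise (it contains the noise through $(\WH{f_k}+n^{-1/2}\xi_k)^2$ terms); your proposed fix --- expanding $1/M_n''$ around its mean --- only works on the event where $M_n''$ is close to its mean, and you do not treat the complementary event. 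The paper sidesteps this entirely: it rearranges the Taylor expansion as in \eqref{contlp} so that the coefficient of $(\WH{\te}-\te)$ is the \emph{deterministic} $\bE(L''(\te))$ (bounded above and below by constants), and absorbs the fluctuation $(\WH{\te}-\te)[L''(\te)-\bE L''(\te)]$ into the error terms, which are then dispatched by triangle plus Cauchy--Schwarz using $\bE[L''-\bE L'']^2=O(1/n)$ and $\bE(\WH{\te}-\te)^2=O(1/n)$. No inversion of a random quantity is ever needed.

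Two further soft spots in your plan. First, $\sup_\zeta|L^{(3)}(\zeta)|$ is \emph{not} uniformly bounded by a deterministic constant --- it contains $\xi_k^2+\xi_k^{*2}$ terms, and the paper's Lemma~\ref{lemel} only gives $\bE(\sup_\zeta L^{(3)}(\zeta)^2)=O(1)$ --- so your remainder must be handled by Cauchy--Schwarz against $\bE((\WH{\te}-\te)^4)^{1/2}$, not by multiplying a sup bound into $\bE((\WH{\te}-\te)^2)=O(1/n)$. Second, this is precisely where the $\log n$ in the statement comes from: on $\cA_1=\{|\WH{\te}-\te|\leq D(n^{-1}\log n)^{1/2}\}$ one gets $\bE((\WH{\te}-\te)^4)^{1/2}=O(\log n/n)$, with the complement exponentially negligible by \eqref{optbound}. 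The $\log n$ is therefore traceable to the deviation bound of Lemma~\ref{tsy}, not to condition (C2) as you conjecture. With these two corrections your argument closes, but you should seriously consider adopting the paper's deterministic-coefficient rearrangement rather than inverting $M_n''(\te_j)$.
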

This Lemma requires only slightly more regularity on $f$ than a second derivative bounded in $L^2$, which is what condition (A3) imposes, that is $\sum_{k\geq 1}k^4 f_k^2 \leq C_0$. 
It enables us to have slightly broader framework for our results in Section \ref{npara}. However, one can still obtain interesting results without using this Lemma, see Remark \ref{remcond} after Theorem \ref{nonparam}.

\subsection{Case of the period model}

Let us now consider the period model mentioned in the introduction, where symmetry of the functions is not assumed.
The random variables $\theta_j$ arise this time as period of periodic functions. The observations in a fixed and equally spaced design are
\begin{equation}\label{eqperiod}
 Y_{ij}=f^{[j]}\left(\frac{i}{n\theta_j}\right)+\veps_{ij}\quad\quad i=-n/2,\ldots,n/2~,j=1,\ldots, J_n~,
\end{equation}
where the $\theta_j$'s belong to a compact interval in $]0,\pli[$  and the 1-periodic functions $f^{[j]}$ fulfill some smoothness assumptions, for instance the ones assumed in \cite{IC05} in the Gaussian white noise framework.

It is established in \cite{IC05} that
the penalized profile likelihood method yields estimators satisfying, with appropriate rescaling,
statements similar to \eqref{optbound} and \eqref{o2}, in the  continuous-time model, 
see \cite[Lemma 11 and Theorem 1]{IC05}.

Hence one can apply the method of this paper for estimating
the law of the $\theta_j$'s in model \eqref{eqperiod}, provided one can transpose the proofs of \eqref{optbound}-\eqref{o2} for the continuous-time model in terms of the discrete framework, as is done here in Section 5 for the shift model.



\section{Nonparametric estimation of the distribution $\mu$} \label{npara}

We are interested in the estimation of the distribution of the unobserved sample
$\theta_1,\dots,\theta_{J_n}$ in model \eqref{eq:pb}.
We shall assume that the number of curves
$J_n$ tends to $\pli$. Our approach is based on the assumption that, along each curve, one can estimate in an
appropriate way the corresponding $\theta_j$.

More precisely, the realizations $\theta_1,\dots,\theta_{J_n}$ are unknown but we assume that they can be
approximated by some preliminary estimators $\WH{\theta}_{j,n}$, for $j=1,\dots,J_n$, denoted for simplicity  $\WH{\theta}_{j}$ in the sequel. 
\begin{defi} \label{defiapp}
We say that the random variables $\WH{\theta}_{1},\ldots,\WH{\theta}_{J_n}$ {\em approximate} the sample $\te_1,\ldots,\te_{J_n}$ if for each $j$, the variable $\WH{\theta}_{j}$ is built using the observations $Y_{1j},\ldots,Y_{nj}$ (i.e is measurable with respect to these observations)  and satisfies the deviation bound \eqref{optbound} given in Lemma~\ref{tsy}. 
\end{defi} 
Note in particular that with this definition, the random variables $\WH{\theta}_j$ are independent. 
The fact that \eqref{optbound} holds roughly means that the $\theta_j$'s are approximated 
by the $\WH{\theta}_{j}$'s at almost parametric rate with an exponential control of the deviation probability. 

In Section \ref{siden}, we have studied in details a possible way of obtaining $\WH{\theta}_j$'s satisfying this approximation property in model \eqref{eq:pb}. However, we would like to point out that the results of Section 
\ref{npara} hold as long as the $\WH{\theta}_j$'s are approximations of the $\theta_j$'s in model \eqref{eq:pb} in the sense of Definition \ref{defiapp} (for Theorem \ref{nonparam} below, we shall also assume that \eqref{eq:espcond} holds), which possibly allows using estimators produced by other methods. Extensions to frameworks beyond model \eqref{eq:pb} could also be considered.

\subsection{A discrete estimator of $\mu$.}

A first way to define an estimator of $\mu$ is to consider a plug-in version of the usual empirical
distribution,  defined using the preliminary estimates $\WH{\theta}_{j}$ as
\begin{equation}
  \label{eq:1}
  \WH{\mu}_{J_n}=\frac{1}{J_n} \sum_{j=1}^{J_n} \delta_{\WH{\theta}_{j}}.
\end{equation}
The empirical distribution computed with the conditional estimators  of the shifts provides a consistent
approximation of the distribution of the true random shifts, in a weak sense.\\

\begin{thm}[Weak consistency of the plugged empirical measure]  \label{wcle}
Assume  \eqref{optbound}, that $\Theta$ is compact  and that there are positive finite constants $\alpha$ and $B$ such that $J_n \leq B n^{\alpha}$ and $J_n \rightarrow + \infty$. Then it holds
  \begin{equation}
    \label{eq:2}
    \WH{\mu}_{J_n} \stackrel{J_n \rightarrow \infty}{\rightharpoonup} \mu \quad \text{almost surely},
  \end{equation}
which means that for all continuously differentiable compactly supported function $g$,
$$ \WH{\mu}_{J_n} g=\frac{1}{J_n} \sum_{j=1}^{J_n} g(\WH{\theta}_j) \rightarrow \mu g =\bE(g(\theta))
\quad \text{almost surely}.\\ $$
\end{thm}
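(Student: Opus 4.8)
The plan is to prove almost sure weak convergence of $\WH{\mu}_{J_n}$ to $\mu$ by reducing it, via the triangle inequality, to two separate statements: first, that the genuine empirical measure $\mu_{J_n} = J_n^{-1}\sum_{j=1}^{J_n}\delta_{\te_j}$ converges weakly to $\mu$ almost surely, and second, that the discrepancy between $\WH{\mu}_{J_n}g$ and $\mu_{J_n}g$ vanishes almost surely for every fixed test function $g$. For a fixed continuously differentiable compactly supported $g$, I would write
\begin{equation*}
\ABS{\WH{\mu}_{J_n}g - \mu g} \leq \ABS{\WH{\mu}_{J_n}g - \mu_{J_n}g} + \ABS{\mu_{J_n}g - \mu g}.
\end{equation*}
The second term goes to $0$ almost surely by the strong law of large numbers applied to the i.i.d.\ variables $g(\te_j)$, since $\mu_{J_n}g$ is an empirical mean converging to $\bE(g(\theta))=\mu g$. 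So the whole content is in controlling the first term.

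For the first term I would exploit the smoothness of $g$: since $g$ is $\cC^1$ with compact support it is Lipschitz, so
\begin{equation*}
\ABS{\WH{\mu}_{J_n}g - \mu_{J_n}g} \leq \frac{\LIP{g}}{J_n}\sum_{j=1}^{J_n}\ABS{\WH{\theta}_j - \theta_j}.
\end{equation*}
Thus it suffices to show that $J_n^{-1}\sum_{j=1}^{J_n}\ABS{\WH{\theta}_j - \theta_j} \to 0$ almost surely. Here the deviation bound \eqref{optbound} enters: it gives, uniformly in $j$, an exponential tail $\bP(\sqrt{n}\ABS{\WH{\theta}_j-\theta_j} > K\sqrt{\log n}\mid\theta_j) \leq c_1\exp(-c_2K^2\log n) = c_1 n^{-c_2 K^2}$. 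I would fix $K$ large enough that $c_2 K^2 > \alpha + 1$ (using $J_n \leq Bn^\alpha$), so that the probability that \emph{any} of the $J_n$ estimators deviates by more than $K\sqrt{\log n / n}$ is at most $J_n \cdot c_1 n^{-c_2K^2} \leq Bc_1 n^{\alpha - c_2 K^2}$, which is summable in $n$. By Borel--Cantelli, almost surely for all $n$ large enough every $\ABS{\WH{\theta}_j - \theta_j} \leq K\sqrt{\log n / n}$ simultaneously, whence the average is bounded by $K\sqrt{\log n/n} \to 0$.

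The one technical subtlety, which I expect to be the main obstacle, is the interplay between the $n\to\infty$ and $J_n\to\infty$ limits: as $J_n\to\infty$ the number of points per curve $n$ also grows, and the deviation bound \eqref{optbound} is only an asymptotic ``for $n$ large enough'' statement, while Borel--Cantelli needs the summability to kick in uniformly. The role of the polynomial constraint $J_n \leq Bn^\alpha$ is precisely to reconcile these: it converts the union bound over $j\in\{1,\dots,J_n\}$ into a polynomial factor $n^\alpha$ that the exponential bound absorbs with room to spare, provided $K$ is chosen large. I would need to be a little careful stating the result along the sequence indexed by $n$ (since $J_n$ is a function of $n$) rather than treating $J_n$ as a free index, and to handle the conditioning on $\theta_j$ cleanly — integrating out the deviation bound over $\theta_j\in\Theta$ poses no problem since the bound is uniform in $\theta_j$. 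Once the almost sure uniform smallness of the $\ABS{\WH{\theta}_j-\theta_j}$ is established, combining the two pieces gives $\WH{\mu}_{J_n}g \to \mu g$ almost surely for each fixed $g$, which is exactly the asserted weak convergence.
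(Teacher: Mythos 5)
Your proposal is correct and follows essentially the same route as the paper's proof: the same add-and-subtract decomposition into the true empirical average (handled by the strong law of large numbers) and the discrepancy term (bounded via the Lipschitz property of $g$, the deviation bound \eqref{optbound}, a union bound over $j$ absorbed by the polynomial growth $J_n\leq Bn^{\alpha}$, and Borel--Cantelli). The only cosmetic difference is that you apply Borel--Cantelli to the event that all $J_n$ deviations are simultaneously small, whereas the paper applies it directly to the event that the averaged discrepancy exceeds $c\sqrt{\log n/n}$; these are interchangeable.
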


\begin{proof}
For $g$ a continuously differentiable compactly supported function, we get that
\begin{align*}
   \WH{\mu}_{J_n} g & = \frac{1}{J_n} \sum_{j=1}^{J_n} \left( g(\WH{\theta}_j)-g(\theta_j) \right) \quad (I) \\
 & + \frac{1}{J_n} \sum_{j=1}^{J_n} g(\theta_j) \quad \quad \quad \quad \quad (II).
\end{align*}
The law of large numbers ensures that, almost surely,
\begin{equation} (II) \stackrel{J_n \rightarrow \infty}{\longrightarrow} \bE(g(\theta)). \label{conclu1}
\end{equation}
Now Taylor upper bound leads to $
  | \frac{1}{J_n} \sum_{j=1}^{J_n} \left( g(\WH{\theta}_j)-g(\theta_j) \right)|  \leq  \frac{1}{J_n}
  \sum_{j=1}^{J_n} \|g^{'}\|_\infty |\WH{\theta}_j - \theta_j | .$
If $ \| g^{'} \|_{\infty} =0 $, then the previous quantity is equal to zero. Now consider the case
$\|g^{'}\|_{\infty} \neq 0$. Hence, using prior bound and \eqref{optbound}, we get for any $\la \geq 0$
\begin{align*}
  \bP \left(  | \frac{1}{J_n} \sum_{j=1}^{J_n} [ g(\WH{\theta}_j)-g(\theta_j) ] | \geq \la \: \: |\: \: \theta_1,\dots,\theta_{J_n} \right) &
  \leq \sum_{j=1}^{J_n}
  \bP \left( |\WH{\theta}_j - \theta_j| \geq \frac{\la}{\|g^{'}\|_\infty}\: \:  |\: \: \theta_j \right) \\
& \leq c_1 J_n \exp\left(-c_2 \frac{\la^2 n \|f^{[j]\,'}\|^2}{\|g^{'}\|_\infty^2}\right),
\end{align*}
which is uniform in $(\theta_j)_{j=1,\dots,J_n}$. Then, choosing $\la = c \sqrt{ \log n / n}$  leads to the
following bound.
$$ \bP \left(  | \frac{1}{J_n} \sum_{j=1}^{J_n} [ g(\WH{\theta}_j)-g(\theta_j) ] | \geq \la \right) \leq
c_1 J_n n^{-\alpha} n^{\alpha -c_2 c^2  \|f^{[j]\,'}\|^2/\|g^{'}\|_\infty^2}.$$ For $c$ large enough, namely for all
$\eta \geq 0$, $c^2 \geq (1+\alpha+\eta) \|g^{'}\|_\infty^2 / (c_2 \|f^{[j]\,'}\|^2)$, we can write
$$  \bP \left(  | \frac{1}{J_n} \sum_{j=1}^{J_n} [ g(\WH{\theta}_j)-g(\theta_j) ] | \geq c \sqrt{\frac{\log n}{n}} \right)
 \leq c_1 n^{-(1+\eta)}.$$
Borel Cantelli's Lemma enables us to conclude that a.s.
\begin{equation}
  \label{conclu2}
  \frac{1}{J_n} \sum_{j=1}^{J_n} [ g(\WH{\theta}_j)-g(\theta_j) ] \stackrel{J_n\rightarrow + \infty} \longrightarrow 0.
\end{equation}
Finally \eqref{conclu1} and \eqref{conclu2} prove the result.
\end{proof}
 Hence, we have constructed a discrete estimator of the law of the random shifts. Nevertheless,
 in many cases this estimator is too rough when the law of the unknown effect has a density, said $\vphi$, with respect
 to Lebesgue's measure. That is the reason why, in the following, a density estimator is built,
for which we provide functional rates of convergence.

\subsection{Estimation of the density of the random deformation}
Consider the following kernel estimator of $\varphi$, the density of the unobserved $\theta_j$'s in model \eqref{eq:pb}, based on a kernel $K$, to be specified in the following,
 and on the quantities $\WH{\theta}_j$. For all $x$ in $\Theta$, let us define
\begin{equation}\label{defnp}
\hat{\vphi}(x) = \frac{1}{J_n h_n}\sum_{j=1}^{J_n} K \left(\frac{x-\WH{\theta}_j}{h_n} \right).
\end{equation}
In this subsection, we shall assume that the quantities $\WH{\te_j}$ satisfy the approximation property stated in Definition \ref{defiapp} and also the 
control on their expectation provided by \eqref{eq:espcond}. We have checked in Section 2 that both properties
are fulfilled under some regularity conditions for the estimators $\WH{\te_j}$ built in Section 2.1.

Let us denote by $H_M(\be,L)$ the set of all densities $\varphi$ with support included in the interval $[-\ta_0,\ta_0]=\Theta$, which belong to the H\"older class $H(\be,L)$ (see \cite{tsybook}, p.5) and are uniformly bounded by a positive constant $M$.  

For clarity in the following statement, we shall
assume that for $n$ large enough, either $J_n \leq (n/\log n)^\frac{2\beta+1}{\beta+2}$ or the converse
inequality hold, for $\beta$ defined below (otherwise use a subsequence argument).
\vspace{.3cm}
\begin{thm}[Rate of convergence of the nonparametric estimator]
  \label{nonparam}
Let us assume that $\varphi$ belongs to the class $H_M(\be,L)$ for some positive $L$ and $M$ and 
with $\be>1$. Assume moreover \eqref{optbound}, \eqref{eq:espcond} and that the kernel $K$ is smooth, compactly
supported, of order $\lfloor\be\rfloor$.
Then the kernel estimator $\hat{\vphi}$ defined by \eqref{defnp} achieves the following rates of convergence, as $n$ and $J_n$ tend to $+\infty$,
\begin{equation}\label{ratecv}
\sup_{x\in\Theta}\sup_{\varphi\in H_M(\be,L)}\bE\left(\left[\hat{\varphi}(x)-\varphi(x)\right]^2\right) = \begin{cases}
O\left(J_{n}^{-\frac{2\be}{2\be+1}}\right), & \: {\rm if} \:  J_n \leq (n/\log n)^\frac{2\beta+1}{\beta+2} \\
O\left((n/ \log n)^{-\frac{2\be}{\be+2}}\right), & \: {\rm if} \: J_n \geq  (n/\log n)^\frac{2\beta+1}{\beta+2}
\end{cases}
\end{equation}
\end{thm}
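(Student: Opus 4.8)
The plan is to benchmark $\hat{\vphi}$ against the \emph{oracle} estimator built from the true (unobserved) shifts,
\[
\TL{\vphi}(x)=\frac{1}{J_n h_n}\sum_{j=1}^{J_n}K\PAR{\frac{x-\te_j}{h_n}},
\]
and to split the risk as $\bE([\hat{\vphi}(x)-\vphi(x)]^2)\leq 2\,\bE([\hat{\vphi}(x)-\TL{\vphi}(x)]^2)+2\,\bE([\TL{\vphi}(x)-\vphi(x)]^2)$. The second, oracle, term is classical: since $K$ has order $\lfloor\be\rfloor$ and $\vphi\in H(\be,L)$, a Taylor expansion using the vanishing-moment conditions gives a bias $\bE\TL{\vphi}(x)-\vphi(x)=O(h_n^{\be})$ uniformly in $x\in\Theta$ and over the class, while $\vphi\leq M$ together with the i.i.d. structure of the $\te_j$ yields a variance $O(1/(J_n h_n))$. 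Hence this term is $O(h_n^{2\be}+ (J_n h_n)^{-1})$, all bounds uniform in $x$ and $\vphi$.

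The real work is on the approximation term $\hat{\vphi}(x)-\TL{\vphi}(x)=\frac{1}{J_n h_n}\sum_j[K((x-\WH{\te}_j)/h_n)-K((x-\te_j)/h_n)]$. I would first localize on the good event $\cE=\bigcap_j\{|\WH{\te}_j-\te_j|\leq c\sqrt{\log n/n}\}$: by the deviation bound \eqref{optbound}, $\bP(\cE^c)\leq c_1 J_n n^{-c_2 c^2}$, and since $J_n\leq Bn^{\al}$ while $[\hat{\vphi}(x)-\vphi(x)]^2=O(1/h_n^2)$ deterministically (both quantities are bounded), choosing $c$ large makes the contribution of $\cE^c$ negligible. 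On $\cE$ a second-order Taylor expansion of the smooth kernel writes the difference as a linear term $L=-\frac{1}{J_n h_n^2}\sum_j K'(u_j)(\WH{\te}_j-\te_j)$, with $u_j=(x-\te_j)/h_n$, plus a quadratic remainder $Q=O(1)\,\frac{1}{J_n h_n^3}\sum_j K''(\xi_j)(\WH{\te}_j-\te_j)^2$.

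The quadratic remainder is the easy and, as it turns out, dominant piece of the approximation error: on $\cE$ one has $(\WH{\te}_j-\te_j)^2\leq c^2\log n/n$, and $K''(\xi_j)\neq 0$ forces $|x-\te_j|\leq Ch_n$, so only $N=\#\{j:|x-\te_j|\leq Ch_n\}$ terms survive, with $\bE N^2=O(J_n^2 h_n^2)$; this gives $\bE(Q^2\mathbf{1}_{\cE})=O((\log n)^2/(n^2 h_n^4))$. The linear term $L$ is the crux, and this is exactly where Lemma~\ref{espcond} is indispensable: the naive term-by-term bound $|L|\leq \NI{K'}(J_n h_n^2)^{-1}\sum_j|\WH{\te}_j-\te_j|$ only produces, after squaring, something of order $(n/\log n)^{-\be/(\be+2)}$, which is \emph{larger} than the target. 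Instead I would use that, conditionally on $(\te_1,\dots,\te_{J_n})$, the errors $\WH{\te}_j-\te_j$ are independent (Definition \ref{defiapp}), so $L$ concentrates about its conditional mean: its conditional variance is $O(\log n/(nJ_n h_n^3))$, and its conditional mean is controlled by \eqref{eq:espcond}, $\bE(\WH{\te}_j-\te_j\,|\,\te_j)=O(\log n/n)$, giving a squared bias $O((\log n)^2/(n^2 h_n^2))$. Both are strictly of smaller order than $h_n^{2\be}$ at the bandwidths below, so $L$ is negligible and $Q$ governs.

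Collecting the surviving contributions, $\bE([\hat{\vphi}(x)-\vphi(x)]^2)=O\big(h_n^{2\be}+ (J_n h_n)^{-1}+(\log n)^2(n^2 h_n^4)^{-1}\big)$, uniformly in $x$ and $\vphi$, and I would optimize $h_n$ by regime. When $J_n\leq(n/\log n)^{(2\be+1)/(\be+2)}$, the choice $h_n\asymp J_n^{-1/(2\be+1)}$ balances the first two terms and renders the third subdominant, yielding $O(J_n^{-2\be/(2\be+1)})$; when $J_n\geq(n/\log n)^{(2\be+1)/(\be+2)}$, the choice $h_n\asymp(\log n/n)^{1/(\be+2)}$ balances the first and third terms and renders the variance subdominant, yielding $O((n/\log n)^{-2\be/(\be+2)})$. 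One checks that the stated threshold is precisely the value of $J_n$ at which the two optimal bandwidths, and the two rates, coincide; the hypothesis $J_n\leq Bn^{\al}$ is used only to kill the $\cE^c$ remainder, and $\be>1$ (so that $\lfloor\be\rfloor\geq 1$) keeps the linear-term remainders strictly below $h_n^{2\be}$. The main obstacle is exactly this sharp treatment of $L$: a pointwise bound is fatal, and the gain comes only from combining the conditional independence of the $\WH{\te}_j$ with their asymptotic unbiasedness \eqref{eq:espcond}, so that $L$ falls below the quadratic remainder $Q$.
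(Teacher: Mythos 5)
Your proof is correct, but it is organized quite differently from the paper's. The paper does not introduce the oracle estimator at all: it applies the bias--variance decomposition directly to $\hat{\vphi}$, Taylor-expands $K\bigl((\WH{\te}_1-x)/h\bigr)$ around $(\te_1-x)/h$ up to an \emph{arbitrary} order $k$, controls the linear term of the bias by \eqref{eq:espcond}, the intermediate terms by the good event $\{|\WH{\te}_1-\te_1|\leq D(\log n/n)^{1/2}\}$, and kills the order-$k$ remainder (which sits at a random point and cannot be integrated against $\vphi$, since $\WH{\te}_1-\te_1$ and $\te_1$ are not independent) simply by taking $k$ large; the variance is bounded crudely by $C/(J_nh)$. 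You instead stop the expansion at order two and dispose of the quadratic remainder by a counting argument — $K''(\xi_j)\neq 0$ forces $|x-\te_j|\lesssim h$, so only $O(J_nh)$ summands survive — which neatly sidesteps the non-independence issue without needing higher derivatives of $K$; and you treat the linear term $L$ by splitting it into a conditional mean (controlled by \eqref{eq:espcond}, exactly as in the paper) plus a conditional fluctuation controlled via the independence of the $\WH{\te}_j$'s. That last variance step is extra work the paper never has to do, because in the paper the linear term only ever appears inside the \emph{bias} $\bE\hat{\vphi}(x)-\vphi(x)$, where only its expectation matters; this is the main simplification your oracle decomposition forgoes. Both routes land on the same intermediate bound $h^{2\be}+(J_nh)^{-1}+\log^2 n/(n^2h^4)$ and the same two-regime optimization, and your diagnosis that a pointwise bound on $L$ is fatal and that \eqref{eq:espcond} is the key input matches the paper's Remark \ref{remcond}. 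One small caveat: your union bound $\bP(\cE^c)\leq c_1J_nn^{-c_2c^2}$ needs $J_n\leq Bn^{\al}$, which the theorem does not assume (and which can fail in the regime $J_n\geq(n/\log n)^{(2\be+1)/(\be+2)}$); this is easily repaired by localizing on per-curve events $\cE_j$ and bounding $\bE\bigl[(J_n^{-1}\sum_j b_j)^2\bigr]\leq\max_j\bE b_j^2$ for the bad-event pieces, as the paper implicitly does by working term by term.
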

 Thus the classical rate of convergence $J_n^{-\frac{2\be}{2\beta+1}}$ of density estimators over H\"older classes
 $H(\be,L)$, with $\beta>1$, is maintained, provided the number of curves $J_n$ does not exceed  $(n/\log n)^\frac{2\beta+1}{\beta+2}$. In fact, it can be checked, using standard lower bound techniques, that in model \eqref{eq:pb}, the minimax rate of convergence of the pointwise mean-squared risk for the estimation of $\vphi$ over the considered H\"older-class is not faster than constant times $J_n^{-\frac{2\be}{2\beta+1}}$, which yields the rate-optimality of the procedure in this model in the first case of the Theorem. In the other case, the number of points per curve $n$ becomes the limiting factor, and a slower rate specified by \eqref{ratecv} is obtained. Whether this second rate is optimal is an open question.

Our results can be interpreted as follows. The inverse problem is drastically reduced when the number of observations per subject increases,  enabling, in a way, to invert the convolution operator.
  A nonparametric estimation of the density of the unobserved parameter in a regression
framework can only be achieved if there are numerous observations for each curve. In our case, the fact that
asymptotics can be taken both in $J_n$ and in $n$ enables us to estimate first, for each curve,
the random effect and then plug the values to estimate the density.  If the number of observations  per curve is small, as it is usually the case in pharmacokinetics, such techniques cannot be applied  and we refer to \cite{Loubes04b} for an alternative methodology.

\begin{proof}
For simplicity in the notation, we assume throughout the proof that the 
 $\WH{\theta}_j$'s are identically distributed - let us recall that they are independent -, which enables us to just deal with $j=1$. If this is not the case, then one can still use the independence and then bound the different quantities arising $j$ by $j$. Also we denote $h_n$ simply by $h$. 

First note that the bias-variance decomposition is, for any $x$ in $\Theta$,
\begin{eqnarray*}
\bE\left([\hat{\varphi}(x)-\varphi(x)]^2\right)  =  & \left(\bE [\hat{\varphi}(x)]-\varphi(x)\right)^2 &+
\quad \bE\left([\hat{\varphi}(x)-\bE(\hat{\varphi}(x))]^2\right)\\
= & b(x)^2 & +\quad v(x).
\end{eqnarray*}
Let us denote by $\Delta$ the quantity $\hat{\te}_1-\te_1$. Note that, by definition of $\hat{\te}_1$, $\Delta$
is a measurable function of $(\te_1,\{\veps_{i1}\}_{i=1,\ldots,n})$.
 In the sequel, we denote $\Delta=g(\te_1,\veps)$.\\
\\
Let us denote by $\cA_1=\{|\WH{\te_1}-\te_1|\leq D(n^{-1}\log{n})^{1/2}\}$. Using \eqref{optbound},
the probability of its complement is negligible.\\
\\
A Taylor expansion of the kernel $K$ at the order $k$ yields the existence of a random variable
$Z$ such that
\begin{eqnarray}
\lefteqn{\frac{1}{h}\bE K\left(\frac{\WH{\te}_1-x}{h}\right) =
\frac{1}{h}\bE K\left(\frac{\te_1-x}{h}+\frac{\Delta}{h}\right)}\nonumber\\
&& = \frac{1}{h}\bE K\left(\frac{\te_1-x}{h}\right)\label{lab1}\\
&&+\frac{1}{h}\bE\left(\frac{\Delta}{h}
K'\left(\frac{\te_1-x}{h}\right)\right)\label{lab2}\\
&&+\frac{1}{h}\bE\left(\frac{\Delta^2}{h^2}
K''\left(\frac{\te_1-x}{h}\right)\right)\label{lab3}\\
&&+\ldots+\frac{1}{h}\bE\left(\frac{\Delta^{k-1}}{h^{k-1}}
K^{(k-1)}\left(\frac{\te_1-x}{h}\right)\right)\label{lab4}\\
&&+\frac{1}{h}\bE\left(\frac{\Delta^{k}}{h^{k}}
K^{(k)}\left(\frac{Z-x}{h}\right)\right).\label{lab5}\\
\nonumber
\end{eqnarray}
Note that, by the usual properties of a kernel of order $\lfloor\be\rfloor$, see e.g., \cite[Theorem 1.1]{tsybook}, 
for some positive constant $C$ it holds
$$|\eqref{lab1}-\varphi(x)| \leq  C h^{\beta}. $$
It is assumed that the $\WH{\te_j}$'s satisfy \eqref{eq:espcond}, thus
\begin{eqnarray*}
\eqref{lab2} & = & \frac{1}{h}\bE\left(\bE(\Delta|\te_1)\frac{1}{h}
K'\left(\frac{\te_1-x}{h}\right)\right)\\
\\
|\eqref{lab2}| & \leq & \frac{C\log{n}}{nh}
\int\frac{1}{h}\left|K'\left(\frac{u-x}{h}\right)\right|\varphi(u)du\\
 & \leq & \frac{C\log{n}}{nh}\int\left|K'\left(v\right)\right|\varphi(x+vh)dv
\leq \frac{C\log{n}}{nh}\|\varphi\|_{\infty}\int|K'|.\\
\end{eqnarray*}
Splitting $\eqref{lab3}$ using $\cA_1$ and its complement,
$$|\eqref{lab3}|\leq \frac{C\log{n}}{nh^2} \|\varphi\|_{\infty}\int|K''|.$$
By the same argument,
$$|\eqref{lab4}|\leq C\sum_{p=3}^{k-1}\left(\sqrt{\frac{\log{n}}{nh^2}}\right)^p
\leq\frac{C\log{n}}{nh^2}, $$ as soon as $\log{n}/(nh^2)\rightarrow 0$. Finally,
$$|\eqref{lab5}|\leq \frac{C}{h}\left(\sqrt{\frac{\log{n}}{nh^2}}\right)^k
\leq \frac{C}{\sqrt{nh}}\left(\frac{\log{n}}{n^{k-1}h^{2k+1}}\right)^{1/2} $$ Thus
\begin{equation} \label{ebias}
b(x)^2\leq c\left[ h^{2\beta}+\frac{1}{nh}\frac{\log^2{n}}{n h^3}+\frac{1}{nh}
\frac{\log^k{n}}{n^{k-1}h^{2k+1}}\right].
\end{equation}
The variance term is bounded by
\begin{eqnarray*}
v(x) & \leq & \frac{1}{J_nh^2}\bE\left(K\left(\frac{\te_1-x+\Delta}{h}\right)^2\right) \\
 & \leq & \frac{1}{J_nh^2}\bE\left[\bE\left(K\left(\frac{\te_1-x+g(\te_1,\veps)}{h}\right)^2\big|\
\veps\right)\right] \\
& \leq &  \frac{1}{J_nh^2}\bE\left[\int K\left(\frac{u-x+g(u,\veps)}{h}\right)^2
\varphi(u)du\right] \\
& \leq &  \frac{1}{J_nh}\bE\left[\int K\left(v-\frac{g(x+hv,\veps)}{h}\right)^2
\varphi(x+hv)dv\right]\leq \frac{C}{J_nh}\|\varphi\|_{\infty}\|K\|_{\infty}. \\
\end{eqnarray*}
Finally, choosing $k$ large enough in \eqref{ebias}, we obtain, for any $x$ in $\Theta$,
\begin{equation}
  \label{eq:3}
  \bE\left([\hat{\varphi}(x)-\varphi(x)]^2\right) \leq c\left[h^{2\beta}+\frac{1}{nh}\frac{\log^2{n}}{n h^3}\right]+
\frac{C}{J_nh}\|\varphi\|_{\infty}\|K\|_{\infty}
\end{equation}
To obtain the rate of convergence of $\hat{\varphi}$, we distinguish two cases, depending on whether the second or the third term in the
preceding display is dominant.
\begin{itemize}
\item If $J_n \leq (n / \log n)^\frac{2\beta +1}{\beta +2}$, then choosing $h_n = n^\frac{-1}{2\beta+1}$ implies that
$\frac{1}{nh}\frac{\log^2{n}}{n h^3} \leq \frac{C}{J_nh}$, leading to the rate
\begin{equation*}
  \bE\left([\hat{\varphi}(x)-\varphi(x)]^2\right) \leq c J_n^{-\frac{2\beta}{2\beta +1}}.
\end{equation*}
\item If $J_n \geq (n / \log n)^\frac{2\beta +1}{\beta +2}$, then choosing $h_n = (\log n /n)^\frac{1}{\beta+2}$, implies that
$\frac{1}{nh}\frac{\log^2{n}}{n h^3} \geq \frac{C}{J_nh}$, leading to the rate
\begin{equation*}
  \bE\left([\hat{\varphi}(x)-\varphi(x)]^2\right) \leq c n^{-\frac{2\beta}{\beta +2}}.
\end{equation*}
Other choices of $h_n$ can easily be seen to lead to slower rates when optimizing \eqref{eq:3}.
\end{itemize}
\end{proof}

\begin{rem}
Note that the difficulty of the proof relies on the fact that, a priori, $\Delta=\hat{\te}_1-\te_1$ and $\te_1$
{\em are not independent}, see for instance  the expression of the shift estimator given by \eqref{definit}.
Thus one cannot easily change variables in integrals of the type $\int K\left(\frac{u-x+g(u,\veps)}{h}\right)
\varphi(u)du$ since  $g$ depends on $u$.
\end{rem}
\begin{rem} \label{remcond}
 Theorem \ref{nonparam} requires the conditions $\beta>1$ and \eqref{eq:espcond} to be fulfilled. However,
if one (or both) of these two conditions is not assumed, then it is not difficult to check from the preceding
proof, using the rough bound  $|\eqref{lab2}|\leq c/(\sqrt{n} h)  $, that one can still recover a rate of
convergence given by optimization in $h$ of $h^{2\beta}+1/(nh^2)+1/(J_n h)$. This leads to a rate in
$J_n^{-\frac{2\beta}{2\beta +1}}$ (respectively $n^{\frac{-\beta}{2\beta+1}}$), for $J_n$ smaller (resp. larger)
than $n^\frac{2\beta+1}{2\beta+2}$.
\end{rem}

\section{Simulations} \label{ssimul}

In this Section, we first present how the shift estimators studied in Section \ref{siden} can be numerically
implemented. The estimation method proposed here is interesting on its own, since it provides a numerically
tractable semiparametric estimator of the translation parameter and generalizes the penalization method proposed in \cite{LLL04}. Second, we construct the nonparametric estimator of the density defined in Section 3.2 and illustrate its behavior on both simulated data and real data. We point out that in the considered examples, we deal with the case where $f^{[j]}=f$ which is often used in practice where individual effects is only expressed through a warping effect of a main behaviour modeled by a (common) unknown function $f$.

\subsection{Numerical algorithm for shift estimation and extensions}

To compute explicitly $\hat{\te}_n$ given by \eqref{definit} for each curve, one has to choose an appropriate
filter $(h_{k})$. According to Lemma \ref{lem:tsy2}, a good choice of weights should make the remainder term $R_n[h,f^{[j]}]$ small. The authors in \cite{Dalalyan03} determined a sequence $(h_k)$ - roughly, a well-chosen sequence of Pinsker weights - such that the second-order term is optimal from the minimax point of view. However, this choice depends on the regularity parameter of the function $f^{[j]}$, which are not known in practice.\\

Here we use an adaptation of the penalization technique introduced in \cite{LLL04} to determine an appropriate sequence $(h_k)$. However, contrary to that paper, where only projection weights $h_k={\bf 1}_{|k|\leq K}$ are considered, note that the criterion \eqref{definit} enables the use of a much broader variety of weights $(h_k)$, provided these satisfy conditions (C)-(T). As explained below, the use of Pinsker weights, see \eqref{pinsker}, enables a smoothing in the criterion which improves estimation with respect to \cite{LLL04}.\\

We also would like to mention an alternative method based on a data-driven choice of the filter, proposed in \cite{Dalalyan07}. This method is very interesting in particular from the theoretical point of view, since it achieves an optimal minimax second-order term and is adaptive to the regularity of $f^{[j]}$. But the method is asymptotic in nature and, though it performs well for not too complicated signals, our method seems more appropriate for complicated signals (that is, with possibly many non-zero Fourier coefficients) $f^{[j]}$, as in the laser
vibrometry example below.\\


Consider the class of {\em Pinsker-type weights}, depending on the parameters $K$ and $\be$, defined by
\begin{equation} \label{pinsker}
h_{k}=\left[1-\left(k/K\right)^{\be}\right]_{+},\quad k\geq 0,
\end{equation}
and $K$ is called the {\em length} of the sequence of weights.\\
\\
Hence a sequence of weights is characterized by the pair $(\be,K)$. To simplify, we fix the value of $\be$ and
take $\be=3$. Thus the family of weights depends on the single parameter $K$. For any filter sequence of length
$K$, we define
\begin{equation}\label{gdlambda}
\Lambda_{K}(\ta)=\sum_{k=1}^{K}h_{k}\left|\uns\sum_{i=1}^{n}\cos(2\pi k(t_{i}-\ta))Y_{i}\right|^2,
\end{equation}
where $Y_{i},\ i=1,\ldots,n$ is the data corresponding to one curve in model \eqref{eq:pb}. To make the
estimator feasible, we take the values of $\ta$ in a fixed regular grid of mesh $1/m$:
$\{\ta_{1},\ldots,\ta_{i+1}=\ta_{1}+i/m,\ldots,\ta_{max}\}$, of range inferior to $1/2$ (let us recall that the
diameter of $\Theta$ has to be bounded above by $1/2$). Let us define
\begin{eqnarray*}
\WH{\ta}(K)&=&\Argm{\ta_{1},\ldots,\ta_{max}}\Lambda_{K}(\ta),\\
 M(K)&=&\mymax{\ta_{1},\ldots,\ta_{max}}\Lambda_{K}(\ta).
\end{eqnarray*}
{\em Penalization.} We would like to find an adapted sequence of weights, or equivalently an integer $K$. This
is done, as in \cite{LLL04} or \cite{IC05}, using a penalization method. Let
\begin{equation}\label{kchap}
\WH{K}(\al)=\Argm{K_{1},\ldots,K_{max}}{\{-M(K)+\al K\}}.
\end{equation}
The parameter $\alpha$ should yield a trade-off between the fit with the data and the filter length $K$ that can
be viewed as the complexity of the chosen model. We use a data-driven method to find an appropriate $\alpha$.
The idea is to detect the changes in the convex hull of the function $K\rightarrow -M(K)$. Let us recall the
following lemma from \cite{LLL04}.\\
\begin{lem}\label{lemmeKb}
There exist two sequences $K_{1}=1<K_{2}<\cdots$, and $\al_{0}=\pli>\al_{1}>\cdots$, with:
$$\al_{p}=\max_{K_{p}<K\leq K_{max}}\frac{M(K_{p})-M(K)}{K_{p}-K}=\frac{M(K_{p+1})-M(K_{p})}{K_{p+1}-K_{p}},\
p\geq 1,$$ and such that $$\forall\ \al\in(\al_{p};\al_{p+1}],\ \WH{K}(\al)=K_{p}.$$
\end{lem}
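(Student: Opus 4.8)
The plan is to read the statement as a purely convex-geometric fact about the finite planar set $\{(K,-M(K)):K\in\{K_1,\dots,K_{max}\}\}$. For a fixed $\al$, the quantity optimized in \eqref{kchap} is a linear functional of the plane point $(K,-M(K))$, so its optimizer over the finite set is attained at a vertex of the convex hull, and as $\al$ varies the optimizer rolls along the hull from one vertex to the next. The content of the lemma is then that these vertices are exactly $K_1<K_2<\cdots$, the convex-hull vertices of the points $(K,-M(K))$ (equivalently, the concave majorant of $M$), and that the critical values of $\al$ at which the optimizer jumps are exactly the slopes $\al_1>\al_2>\cdots$ of the hull edges.

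I would first build the two sequences greedily, which simultaneously proves the displayed identity for $\al_p$. Set $K_1=1$; given $K_p$, let $\al_p$ be the maximum over $K_p<K\leq K_{max}$ of the secant slopes $\frac{M(K_p)-M(K)}{K_p-K}$, and let $K_{p+1}$ be the largest index attaining this maximum. By definition $K_{p+1}>K_p$ and $\al_p=\frac{M(K_{p+1})-M(K_p)}{K_{p+1}-K_p}$, which is precisely the asserted equality; the process halts when $K_p=K_{max}$, so both sequences are finite.

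The key structural step is the strict decrease $\al_0=\pli>\al_1>\al_2>\cdots$, which I would obtain by a three-point (weighted-average) argument. For any $K'>K_{p+1}$, the secant slope from $K_p$ to $K'$ is the convex combination, with positive weights $K_{p+1}-K_p$ and $K'-K_{p+1}$, of $\al_p$ and of the secant slope from $K_{p+1}$ to $K'$. If the latter slope exceeded $\al_p$, the combined slope from $K_p$ to $K'$ would also exceed $\al_p$, contradicting the maximality that defines $\al_p$. Hence every secant slope issued from $K_{p+1}$ is at most $\al_p$, giving $\al_{p+1}\leq\al_p$, and the choice of $K_{p+1}$ as the \emph{largest} maximizer upgrades this to a strict inequality. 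It remains to identify $\WH{K}(\al)$: for $\al$ in the half-open slab of slopes supporting a fixed vertex $K_p$ — the interval bounded by the two edge-slopes adjacent to $K_p$ — I would compare the criterion at $K_p$ with its value at any competitor $K$, observing that the sign of the difference is governed by the comparison of $\al$ with the secant slope joining $K_p$ and $K$; the hull inequalities already proved make this comparison favour $K_p$ simultaneously for all $K<K_p$ and all $K>K_p$ throughout that slab, so $K_p$ is the index selected by \eqref{kchap} there, while non-hull indices are never selected because the corresponding point $(K,-M(K))$ lies strictly off the hull.

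The main obstacle, and essentially the only delicate point, is the bookkeeping in this last step: verifying that a single comparison of $\al$ against the two adjacent edge-slopes really dominates all competitors at once (this is where convexity of the hull is used in full), and pinning down the half-open endpoints and the tie-breaking so that the slabs exactly partition $(0,\pli]$ and each boundary slope $\al_p$ is attached to the correct side, as in the half-open intervals of the statement. Since this lemma is imported from \cite{LLL04}, I would finally check only that our $M(K)$ enters \eqref{kchap} in the same way as the penalized quantity there, so that the cited convex-hull argument transfers without change.
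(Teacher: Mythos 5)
The paper does not actually prove this lemma: it is recalled verbatim from \cite{LLL04}, and the only internal commentary is the remark immediately following it, namely that connecting the points $(K_i,M(K_i))$ gives the convex hull of $K\mapsto -M(K)$. Your proposal therefore supplies a proof where the paper supplies a citation, and the proof you give is the right one: the greedy construction of $(K_p,\al_p)$ via maximal secant slopes (taking the largest maximizer) yields the displayed identity by definition, the three-point convex-combination argument correctly gives $\al_{p+1}\leq\al_p$ with strictness coming from the largest-maximizer tie-break, and the identification of $\WH{K}(\al)$ on each slab reduces, as you say, to comparing $\al$ with secant slopes: for $K>K_p$ the slope from $K_p$ is at most $\al_p$ by construction, and for $K<K_p$ one chains the hull inequalities $M(K)\leq M(K_j)+\al_j(K-K_j)$ together with $\al_j\geq\al_{p-1}$ to get that every secant slope into $K_p$ from the left is at least $\al_{p-1}$, so $K_p$ dominates all competitors simultaneously for $\al$ strictly between the two adjacent edge slopes. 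The only points you leave open are exactly the ones worth flagging: the attribution of the boundary values $\al_p$ to one side of each half-open interval (a pure tie-breaking convention in the argmax), and the fact that the statement as recalled has two notational quirks you should not try to reproduce literally --- the criterion in \eqref{kchap} is written as an $\argm$ of $-M(K)+\al K$ but must be read as maximizing the penalized quantity $M(K)-\al K$, and the interval $(\al_p;\al_{p+1}]$ must be read with the decreasing order of the $\al$'s in mind. Neither affects the substance of your argument.
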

Note that connecting the points $(K_{i},M(K_{i}))$ gives the convex hull of the function $K\rightarrow -M(K)$
over the points $K_{1},\ldots,K_{max}$.
Let us define our estimator of the period as
\begin{equation}\label{estnum}
\ta^{*}=\Argm{\ta_{i}}\sum_{p\ :\ \WH{\ta}(K_{p})=\ta_{i}}\{\alpha_{p-1}-\alpha_{p}\}.
\end{equation}
In words, the estimator chooses the point at which the cumulated jump in the derivative is the highest. Note
that different values $\alpha^*$ of $\alpha$ led to such an estimator, which satisfies the identity
$\tau^*=\hat{\tau}(\hat{K}(\alpha^*))$.\\
\\
{\em Illustration of the algorithm.} Figure \ref{plein} illustrates this algorithm with $f$ equal to
$f_1(x)=0.015*\cos\{100\cos(\pi(x-\tau))\}$. The first graph represents the criterion $-M(K)$ together with, in
dotted line, its convex hull. The stem diagram represents the differences $\alpha_{p-1}-\alpha_{p}$
corresponding to the points $K_{p}$. Finally the estimated shifts for the different values of $K$ are
represented by the last graph. The numerical parameters are the following: $n=800$, and the true shift parameter
$\tau=0.35$. The grid for $\tau$ is the regular grid of $[0.25,0.75]$ with $100$ points. Note that, due to the
high level of the noise (small amplitude of the signal with respect to the noise variance), it is difficult to
detect visually the changes in the criterion behavior. Nevertheless the algorithm succeeds in finding the true shift.\\
\\
\indent The number of significative harmonics of $f_1$ is roughly 100, thus if we knew $f_1$, taking $K$ of the
order of 100 would be a reasonable choice in view of \eqref{definit}. In fact, for $k$ much larger than 100, the
corresponding elements in the sum of squares in \eqref{definit} are mainly noise. As we see in Figure
\ref{plein}, with the choice of $\WH{\tau}$ given in \eqref{estnum},
our algorithm chooses $\WH{K}$ in the appropriate interval.\\
\begin{figure}[h!]
\begin{center}
\includegraphics[scale=1]{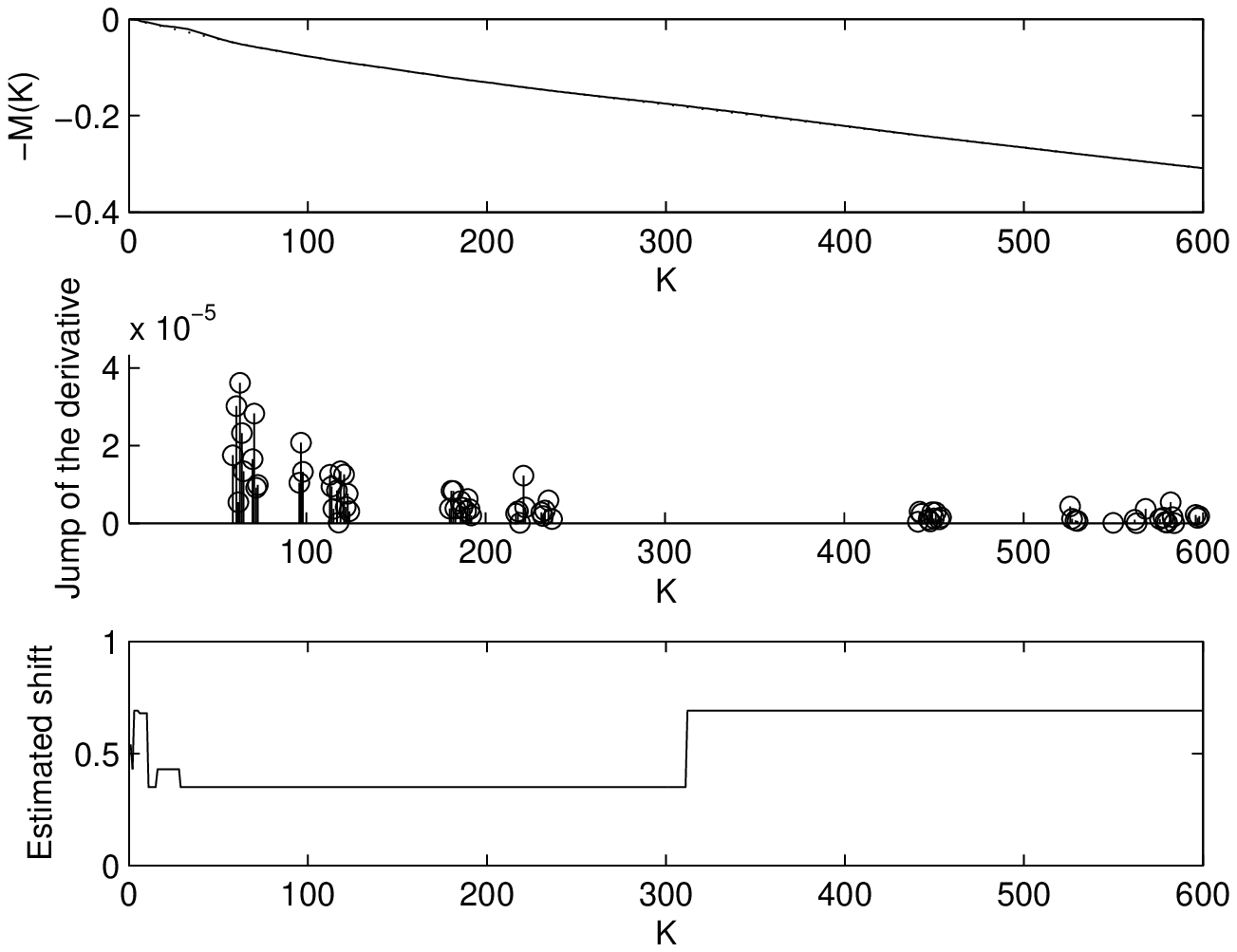}
\caption{Finding the parameter $\WH{\al}$} \label{plein}
\end{center}
\end{figure}

Compared to the method used in \cite{LLL04}, the use of the Pinsker-type weights \eqref{pinsker} allows a
{\em smoothing} with respect to projection weights, making the detection of main jumps in the convex hull of
$K\rightarrow -M(K)$ less sensible to local irregularities of $K\rightarrow M(K)$, which slightly improves estimation. An extensive numerical comparison of the use of the two type of weights, which is beyond the scope of this paper, is carried out in \cite{ICthese} in the case of the period model for discrete design and gains of 10 to 20 \% in the estimation of $\te$ are observed for a laser vibrometry example with the unknown $f$ similar to the function $f_1$ above. \\
\\
{\em The period model.} We note that this algorithm can also be implemented for the period model \eqref{eqperiod} and
more generally if the penalized profile likelihood is known in a closed form. For the period model, the algorithm follows the description above, once one replaces the cosine in \eqref{gdlambda} by its equivalent  $\cos(2\pi k(t_i/\ta))$. A numerical study is carried out in \cite{ICthese}, leading to similar conclusions than the one presented here.

\subsection{Numerical algorithm for density estimation}
Once obtained the estimators of the realizations $\WH{\te}_j,\: j=1,\dots,J$, we can build the estimator of the
density $\vphi$ defined by \eqref{defnp}. We illustrate the good behaviour of our algorithm with three examples. The first one  shows that important features of the target density such as bimodality can be detected with our method. The second example shows that even with quite involved functions, for which the semiparametric step is not easy, the methods performs well, at least if the signal to noise ratio is not too small. The third example deals with a practical application where symmetry can be seen as a sensible assumption. \\
\\
{\em Simulated data (I)}. The function $f$ is the sine function on an half
period, while the law of the shift $\theta$ is a bimodal mixture of compactly supported densities. We perform 50 random translation of
the original curve with $n=100$ observations per curve. In Figure \ref{simule1}, we present the
 observed curves in model \eqref{eq:pb}. To study the performance
 of the estimator described in this paper, we first computed the preliminary estimates $\hat{\theta}_j$
  obtained by the semiparametric method of Section~\ref{siden},
  using the practical algorithm of Section \ref{ssimul}. This set of values
    was then used to build two nonparametric estimators of the density $\varphi$,
    denoted respectively {\it SPGaussian} and {\it SPepanech}, using \eqref{defnp} and
    respectively a Gaussian kernel and Epanechnikov kernel. The smoothing parameters are chosen by
    cross-validation.\\
  We compare their performance with an estimate constructed the following way.
  Using the algorithm described in \cite{Wang99}, applied in the shape invariant model and
  following the lines of Section 3.3 in \cite{Wang99}, we compute nonparametrically the values
  of the warping parameter, used to align the curves to the true shape. Then, using Epanechnikov kernel,
   we build the corresponding density estimator, denoted by {\it NPplug}. \\
\indent Figure 3 carries out the comparison between the preceding estimators.  Visually, the estimators {\it
SPGaussian} and {\it SPepanech} detect the density shape and bimodality and {\it SPepanech}
  matches slightly better the density amplitude. The nonparametric-based kernel estimator {\it NPplug}
  catches the global shape of
   the bimodal density but is too rough, since the method it relies on is far
  too general with respect to the
    semiparametric method designed to handle this particular situation. Hence, plugging a
    semiparametric preliminary estimate into a
     kernel-type estimator leads to a tractable estimator of the density of the shifts without
      knowledge of the shape of the warped function.

\begin{figure}[h!]
\begin{center}
\includegraphics[angle=0,scale=0.7]{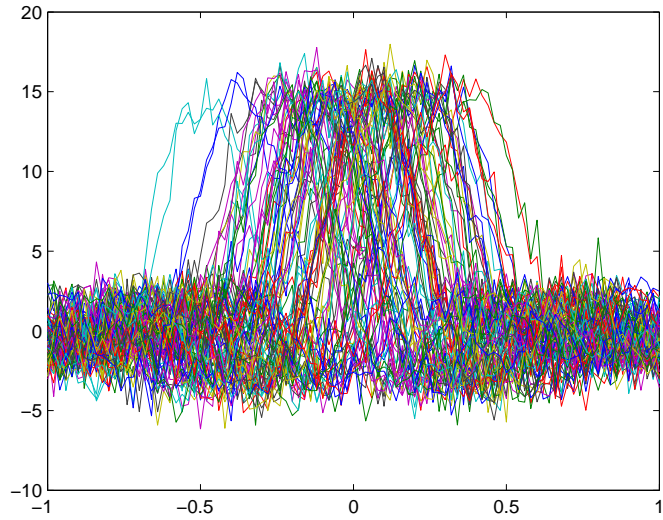}
\caption{Simulated shifted curves.} \label{simule1}
\end{center}
\end{figure}
\begin{figure}[h!]
\begin{center}
\includegraphics[angle=0,scale=0.5]{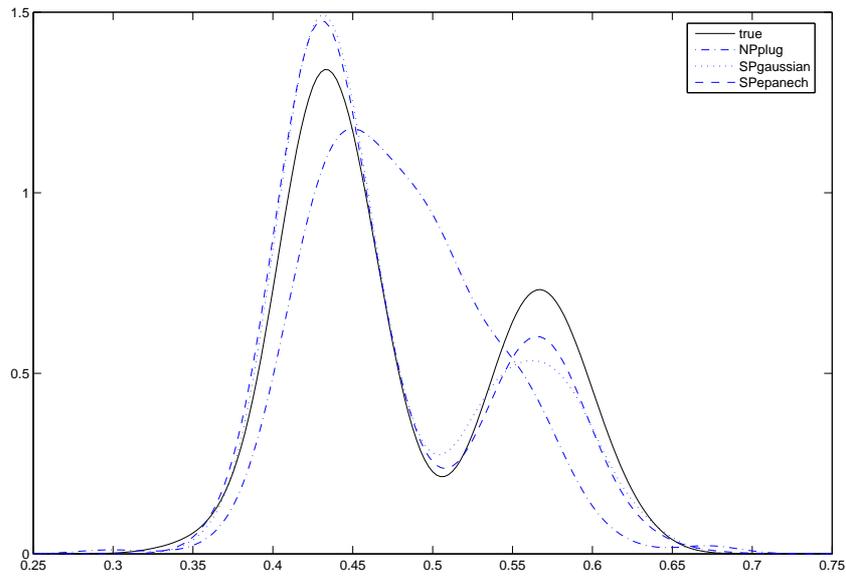}
\caption{Estimators of the shifts density.} \label{simule2}
\end{center}
\end{figure}
\vspace{0.5cm}
\noindent {\em Simulated data (II).}
We consider a function similar to the one introduced in the preceding subsection: $f(x)=\cos\left(100\cos(\pi(x-0.35))\right)$.
Such functions appear for example in laser vibrometry and are
studied in \cite{LLL04} and \cite{IC05} for the period estimation problem. In this case, the semiparametric
estimation step is crucial since the data are fuzzy. In Figure \ref{sim22} we represent the original curve (left picture) and both the  true shift density $\vphi$ 
 and the estimated density, plotted in dotted line (right picture). The curves have been shifted using a compactly supported smooth density $\vphi$, with $n=100$ observations and $J_n=30$. The two functions, the true function $\varphi$  and the estimate, are still visually relatively close.\\
\begin{figure}
\centering
\begin{tabular}{cc}
\epsfig{file=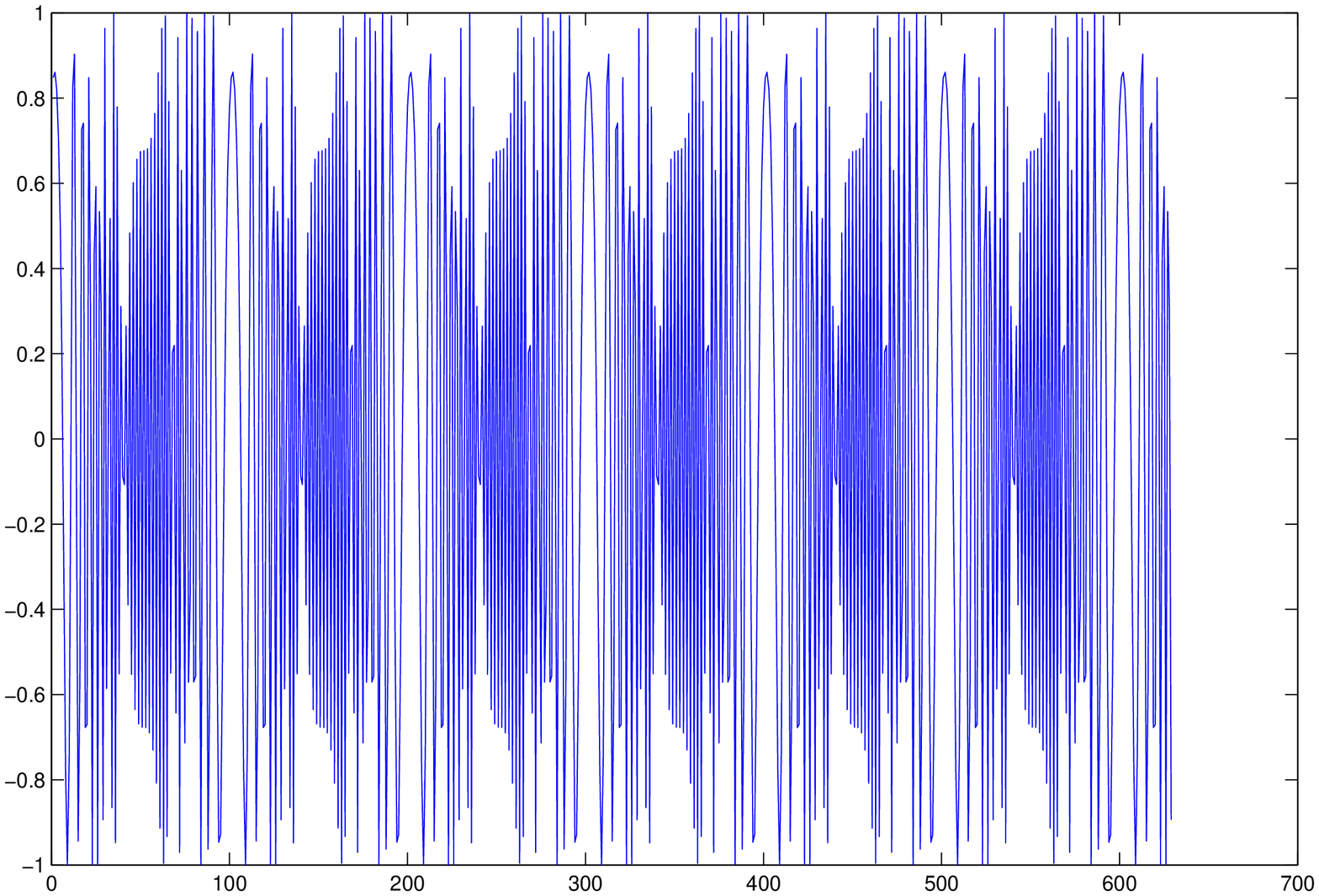,width=0.55\linewidth,clip=} &
\epsfig{file=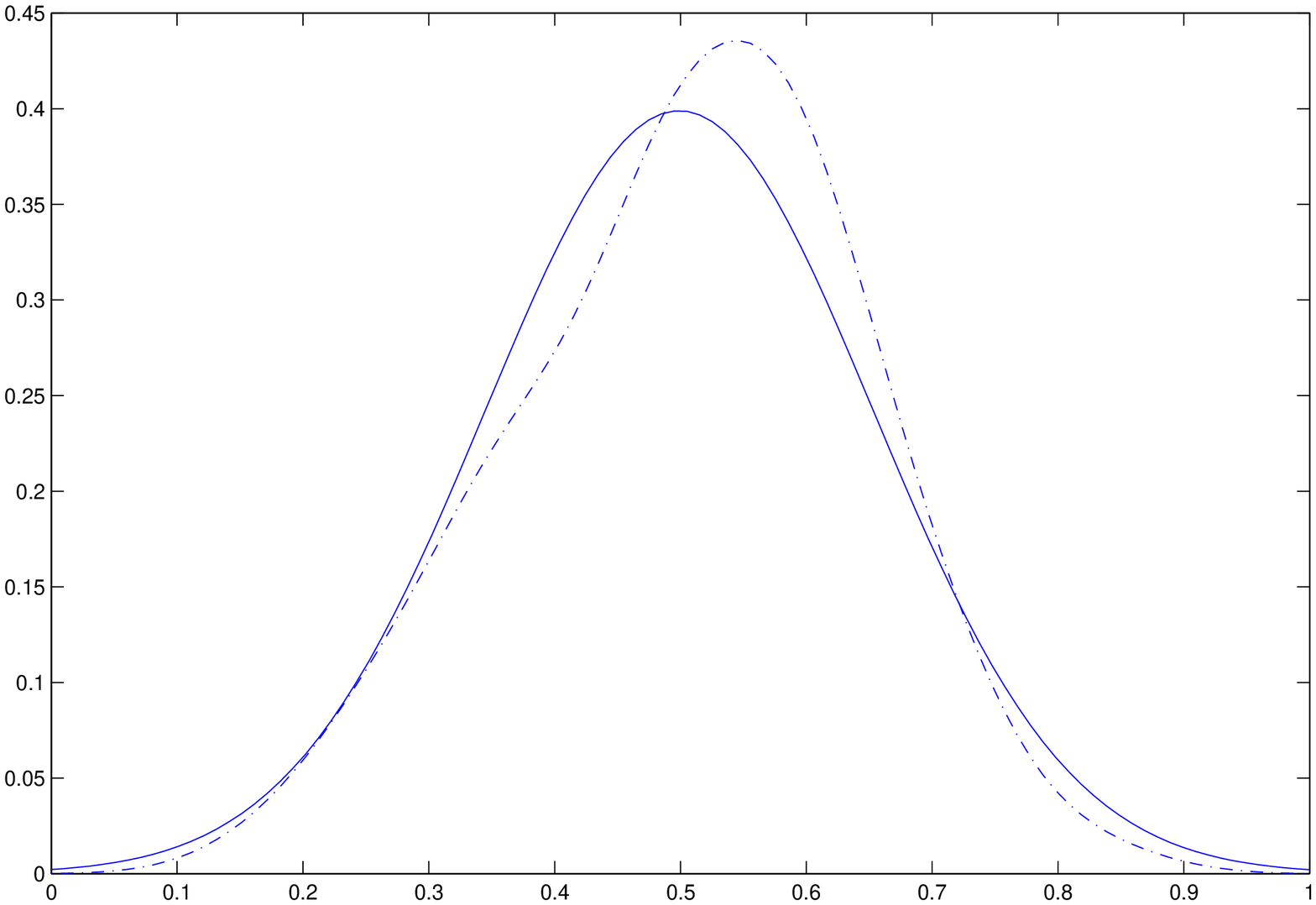,width=0.55\linewidth,clip=}
\end{tabular}
\caption{Simulated laser vibrometry-type functions} \label{sim22}
\end{figure}

\noindent {\em Real data.} We present in Figure~\ref{simule3} an estimation conducted on real data. This data,  provided by ACI-NIM MIST-R (http://www.lsp.ups-tlse.fr/Fp/Loubes/ACI.html), are daily velocities of vehicles on a motorway on the suburbs of Paris.\\
\indent After a preliminary classification which aims at building groups of homogenous curves, we obtain several functional sets, each one representing a particular daily behaviour, as pointed out in \cite{MR2328555}. For one group we get  curves starting and ending at the maximal speed, while presenting some typical patterns which stand for a standard traffic-jam feature, repeated mornings and afternoons. Due to classification, the different features have been split into different classes, as pointed out in \cite{Loubes04}. Hence the curves present some symmetrical aspect but the starting hours of these traffic jams change slightly around a mean time, starting sooner or later each day. Hence, the shift model can be used here, as done also in \cite{Loubes04}.\\
\indent In this study, we get a set of 32 curves with $n=180$ observations which corresponds to a velocity measured every 8 minutes during a day, see the left-hand side of Figure \ref{simule3}. Understanding roadtrafficking behaviour, involves first finding a mean pattern but also studying the density of the random shifts, in order to understand the reasons of this changes around the mean behaviour. The bimodal feature of the estimated density, see the right-hand side of Figure \ref{simule3}, can be later understood as the consequence of different weather conditions on the road network.
\begin{figure}
\begin{tabular}{cc}
\epsfig{file=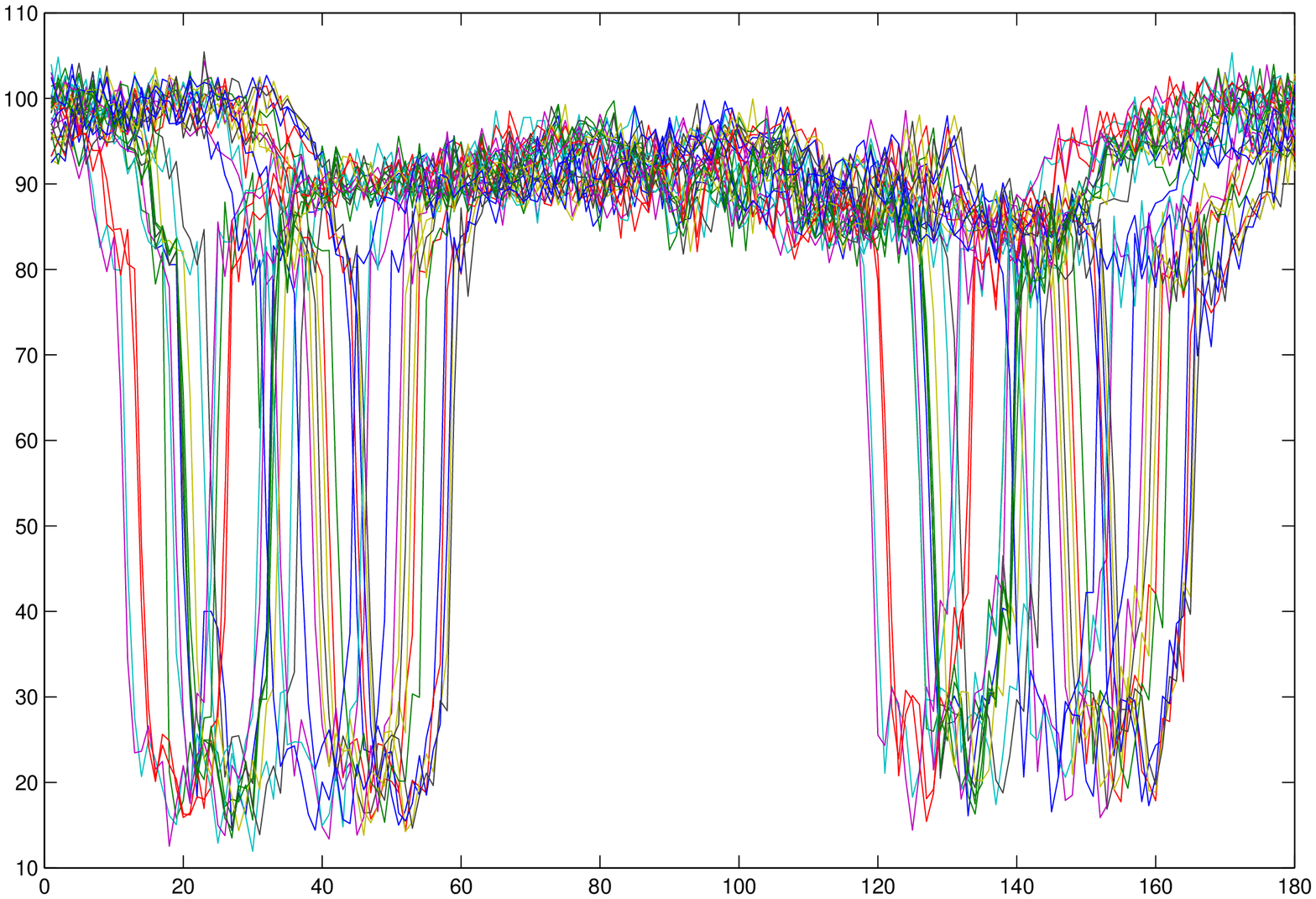,width=0.55\linewidth,clip=} &
\epsfig{file=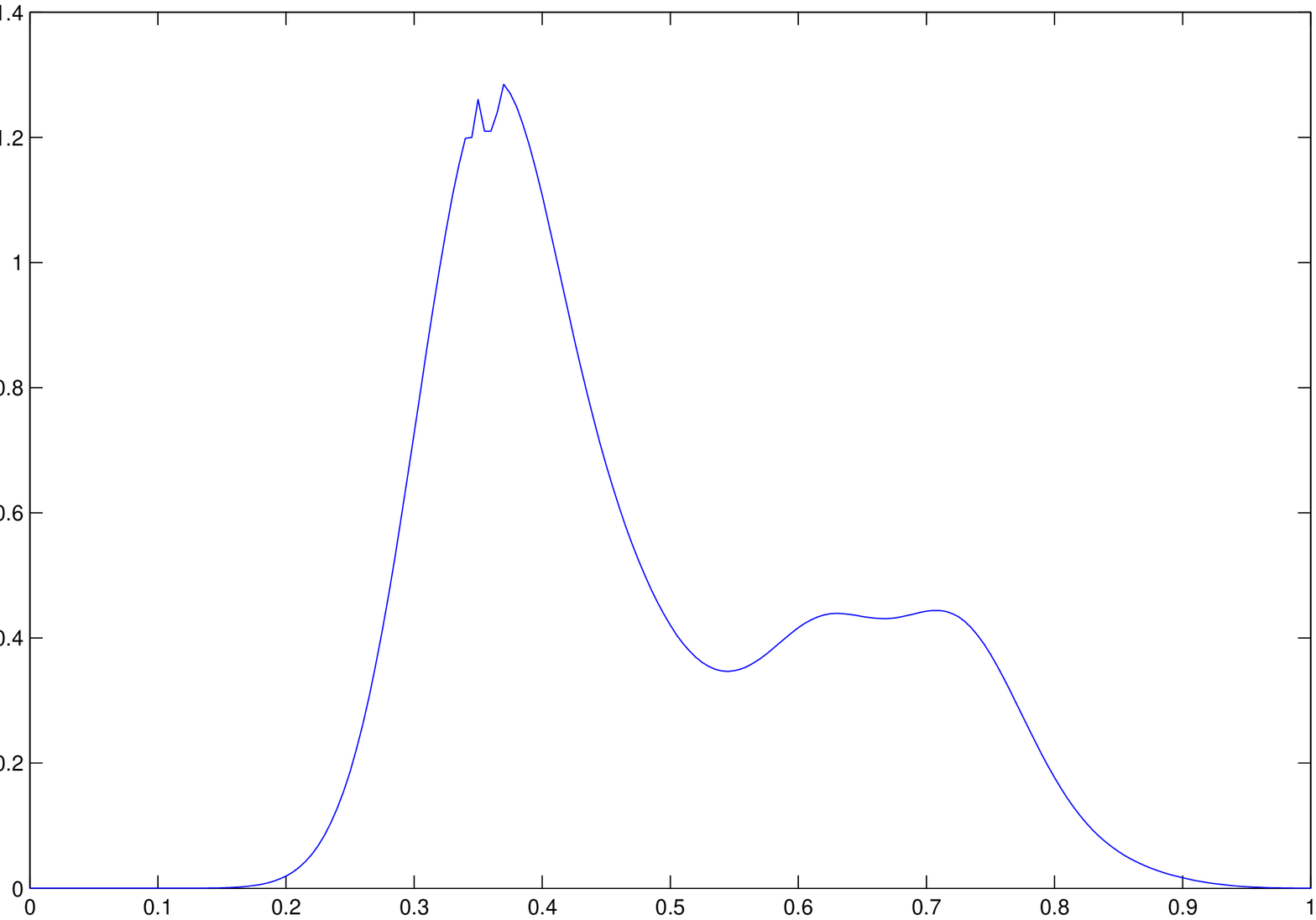,width=0.55\linewidth,clip=}
\end{tabular}
\caption{Real data: velocities curves} \label{simule3}
\end{figure}

\section{Appendix} \label{sappen}

First let us state a useful result about the control of Fourier coefficients by discrete approximations,
which will be used in the sequel to control remainder terms.
Let us denote
\begin{eqnarray}
  \WH{f_{k}} & = & \uns\sum_{i=1}^{n}\rd \cos(2\pi k(t_{i}-\te))f(t_{i}-\te) \label{fk}\\
  \WH{g_{k}} & = & \uns\sum_{i=1}^{n}\rd \sin(2\pi k(t_{i}-\te))f(t_{i}-\te). \label{gk}
\end{eqnarray}
\begin{lem} \label{lemapprox}
For any $f$ in the class $F=F(\rho,C_0)$ satisfying (A1)-(A3), there exists
a constant $C$ depending only on $C_0$ such that, for any $k\geq 1$,
\begin{equation}
 |  \WH{f_{k}} - f_k | \leq C \left(\frac{k}{n}\wedge 1\right) \qquad \text{and} \qquad | \WH{g_{k}} | \leq  C \left(\frac{k}{n}\wedge 1\right), \label{approx}
\end{equation}
where $a\wedge b$ denotes the minimum of the two reals $a$ and $b$.
\end{lem}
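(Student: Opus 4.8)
The plan is to read $\WH{f_k}$ and $\WH{g_k}$ as equispaced Riemann sums of $1$-periodic functions and to control the gap to the corresponding integrals in two complementary ways: a crude bound uniform in $k$, and a Lipschitz-type bound whose constant grows linearly in $k$. Selecting the better of the two in each range of $k$ then produces the factor $k/n\wedge 1$.

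First I would record the a priori bounds furnished by (A3). By Parseval for the orthonormal system $\{\rd\cos(2\pi m t)\}_{m\geq1}$ of $[0,1]$, the assumption $\|f''\|^2\leq C_0$ reads $\sum_{m\geq1}(2\pi m)^4 f_m^2\leq C_0$, whence $|f_m|\leq\sqrt{C_0}\,(2\pi m)^{-2}$. Since the sequence $((2\pi m)^2 f_m)_m$ is then square-summable with $\ell^2$-norm at most $\sqrt{C_0}$, Cauchy--Schwarz against the square-summable weights $(2\pi m)^{-2}$ and $(2\pi m)^{-1}$ bounds $\|f\|_\infty\leq\rd\sum_m|f_m|$ and $\|f'\|_\infty\leq\rd\sum_m 2\pi m\,|f_m|$ by a constant depending only on $C_0$. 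These are the only features of $f$ I use, so all constants below are uniform over $f\in F$ and $\te\in\Theta$.

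Next I would identify the integrals approximated by $\WH{f_k}$ and $\WH{g_k}$. As $t\mapsto\cos(2\pi k(t-\te))f(t-\te)$ is $1$-periodic, the substitution $u=t-\te$ gives $\int_0^1\rd\cos(2\pi k(t-\te))f(t-\te)\,dt=f_k$, while the analogous sine integral is the $k$-th sine coefficient of $f$, which vanishes because $f$ is even. Hence, setting $\tilde g(t)=\rd\cos(2\pi k(t-\te))f(t-\te)$ and $\tilde h(t)=\rd\sin(2\pi k(t-\te))f(t-\te)$, the quantities $\WH{f_k}-f_k$ and $\WH{g_k}$ are precisely the Riemann-sum errors $\uns\sum_{i=1}^n G(t_i)-\int_0^1 G$ for $G=\tilde g$ and $G=\tilde h$.

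For the core estimate I would use the telescoping identity $\uns\sum_{i=1}^n G(t_i)-\int_0^1 G=\sum_{i=1}^n\int_{(i-1)/n}^{i/n}\bigl[G(i/n)-G(t)\bigr]\,dt$ on the grid $t_i=i/n$, and bound each integrand by $\|G'\|_\infty/n$, so that $|\uns\sum_{i=1}^n G(t_i)-\int_0^1 G|\leq\|G'\|_\infty/n$. Differentiating and invoking the first step gives $\|\tilde g'\|_\infty\leq\rd(2\pi k\|f\|_\infty+\|f'\|_\infty)\leq Ck$ for $k\geq1$, and the same for $\tilde h'$, whence $|\WH{f_k}-f_k|\leq Ck/n$ and $|\WH{g_k}|\leq Ck/n$. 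On the other hand the trivial bounds $|\WH{f_k}-f_k|\leq 2\rd\|f\|_\infty$ and $|\WH{g_k}|\leq\rd\|f\|_\infty$ are independent of $k$ and $n$; taking the minimum with the previous display yields the claim. The only points requiring care are the vanishing of the sine integral, which rests on the symmetry of $f$, and the recognition that the factor $2\pi k$ in $\|G'\|_\infty$ is exactly what produces the linear growth in $k$; the rest is routine. One may instead expand $f$ in Fourier series and use the discrete orthogonality $\sum_{i=1}^n\cos(2\pi\ell(t_i-\te))=n\cos(2\pi\ell\te)\,\mathbf{1}_{\{n\mid\ell\}}$ to obtain sharper $O(k^2/n^2)$ bounds when $k\ll n$, but such precision is unnecessary here.
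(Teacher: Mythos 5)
Your proof is correct and follows essentially the same route as the paper's: bound the equispaced Riemann-sum error by $\|\varphi'\|_\infty/n$ via the telescoping decomposition, identify the limiting integrals as $f_k$ and $0$ (the latter by symmetry and periodicity), and control $\|f\|_\infty$, $\|f'\|_\infty$ uniformly over $F$ by Cauchy--Schwarz against $(A3)$, taking the minimum with the trivial uniform bound to get the $k/n\wedge 1$ factor. No discrepancies to report.
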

\begin{proof}
For any continuously differentiable function $\varphi$ on the interval $[0,1]$ and $t_i=i/n$, it holds
$$ \left| \uns \sum_{i=1}^n \varphi(t_i) - \int_0^1 \varphi(u)du \right| =
\left| \sum_{i=1}^n \int_{t_{i-1}}^{t_{i}} (\varphi(t_i) - \varphi(u)) du \right|.$$
If $\|\varphi'\|_{\infty}$ denotes the supremum norm of the derivative of $\varphi$ on $[0,1]$, we have
\begin{eqnarray*}
\left| \uns \sum_{i=1}^n \varphi(t_i) - \int_0^1 \varphi(u)du \right| & \leq &
 \sum_{i=1}^n \int_{t_{i-1}}^{t_{i}} \|\varphi'\|_{\infty}|t_i-u| du \\
 & \leq & \|\varphi'\|_{\infty}  \sum_{i=1}^n  (t_{i}-t_{i-1})^2/2  = \frac{\|\varphi'\|_{\infty}}{2n}.
\end{eqnarray*}
Now let us apply the preceding to the functions $\varphi_1(u)= \cos(2\pi k (u-\te)) f(u-\te)$ and
$\varphi_2(u)=\sin(2\pi k(u-\te)) f(u-\te)$ respectively. By symmetry and 1-periodicity of $f$, we have
$\int_0^1 \varphi_2(u)du = 0$. For any real $u$,
$$ | \varphi_1'(u) | \leq 2\pi k \| f \|_{\infty}  + \|f'\|_{\infty},$$
and, similarly, the same bound holds for $| \varphi_2'(u) |$. Now observe that $\| f \|_{\infty}$ and
$\|f'\|_{\infty}$ are bounded if $f$ belongs to the class $F$. Indeed, if $f\in F$, then $f'$ is continuously differentiable and 1-periodic thus it is the limit of its Fourier series. For any $u\in[0,1]$, we have
$$ f'(u) = \sum_{k\geq 1} (-2\pi k)f_k \sin(2\pi k u).$$
To see that the latter quantity is bounded it suffices to check that the series $\sum k f_k$ converges. This is a consequence of Cauchy-Schwarz inequality and (A3) since
$$ \sum_{k\geq 1} k|f_k| \leq \left(\sum_{k\geq 1} k^4 f_k^2\right)^{1/2} \left(\sum_{k\geq 1} k^{-2} \right)^{1/2}$$
is bounded. Similarly, $\|f\|_{\infty}$ is bounded by $\sum_{k\geq 1}|f_k|$, which is bounded if
$f$ is in $F$, which implies that $\WH{f_{k}}-f_k$ and $\WH{g_{k}}$ are bounded by a constant times $k/n$. The fact that they are also bounded by a constant follows from the fact that $\|f\|_{\infty}$ is bounded over $F$.
\end{proof}
\begin{lem}\label{lemhk}
Assume that conditions (A1)-(A3) and (C1)-(C3) are fulfilled. Then for some positive constant $C$, denoting
 $ \|h'\|^2= \sum_{k\geq 1} (2 \pi k)^2 h_k^2$,
$$ \sum_{k\geq 1} h_k^2 k^6 \leq C n^2, \quad \sum_{k\geq 1} h_k k^2 |f_k| \leq C\frac{\|h'\|}{\log^2{n}} \quad \text{and} \quad
\sum_{k\geq 1} h_k k^2 |f_k (f_k - \WH{f_k})| \leq C\frac{\|h'\|}{n}. $$
\end{lem}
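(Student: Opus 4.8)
The plan is to prove the three estimates separately, each one reducing to a single structural condition among (C2)--(C3) combined with the $L^2$-control $\sum_{k\geq 1}k^4 f_k^2\leq C_0/(2\pi)^4$ coming from (A3) (recall that $\|f''\|^2=\sum_{k}(2\pi k)^4 f_k^2$). Condition (C1) guarantees throughout that all sums are finite and all maxima attained, and I will repeatedly use $h_k^2\leq h_k$, valid since $0\leq h_k\leq1$.

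For the first bound I would use only (C3). The point is that each summand of $\sum_k(2\pi k)^4 h_k$ is dominated by the whole sum, so $(2\pi k)^4 h_k^2\leq(2\pi k)^4 h_k\leq D_1 n$, whence $\max_k (2\pi k)^2 h_k\leq\sqrt{D_1 n}$. I then factor each term as
\[
h_k^2(2\pi k)^6=\bigl[(2\pi k)^2 h_k\bigr]\bigl[(2\pi k)^4 h_k\bigr],
\]
pull the maximum out of the first factor and leave $\sum_k(2\pi k)^4 h_k\leq D_1 n$ in the second, giving $\sum_k h_k^2(2\pi k)^6\leq D_1^{3/2}n^{3/2}$, comfortably below $Cn^2$.

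For the second bound the decisive input is (C2), which is precisely what produces the $\log^2 n$ in the denominator. Writing $h_k k^2|f_k|=(k h_k)(k|f_k|)$ and pulling out the maximum gives $\sum_k h_k k^2|f_k|\leq(\max_k k h_k)\sum_k k|f_k|$. Condition (C2) yields $\max_k(2\pi k)h_k\leq\|h'\|/(\rho_1\log^2 n)$, while Cauchy--Schwarz with (A3) bounds $\sum_k k|f_k|\leq(\sum_k k^{-2})^{1/2}(\sum_k k^4 f_k^2)^{1/2}$ by a finite constant depending only on $C_0$; combining the two gives the claimed $C\|h'\|/\log^2 n$. For the third bound I would first invoke Lemma~\ref{lemapprox}, which gives $|f_k-\WH{f_k}|\leq C(k/n)$, so that $\sum_k h_k k^2|f_k(f_k-\WH{f_k})|\leq(C/n)\sum_k h_k k^3|f_k|$. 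Here, instead of a maximum, I apply Cauchy--Schwarz directly by splitting $h_k k^3|f_k|=(k h_k)(k^2|f_k|)$:
\[
\sum_k h_k k^3|f_k|\leq\Bigl(\sum_k k^2 h_k^2\Bigr)^{1/2}\Bigl(\sum_k k^4 f_k^2\Bigr)^{1/2}\leq\frac{\|h'\|}{2\pi}\sqrt{C_0/(2\pi)^4},
\]
since $\sum_k k^2 h_k^2=\|h'\|^2/(2\pi)^2$ and $\sum_k k^4 f_k^2\leq C_0/(2\pi)^4$ by (A3); dividing by $n$ gives $C\|h'\|/n$.

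The calculations are routine, so the only real care needed is bookkeeping which hypothesis drives which estimate. The subtle one is the second bound: the extra $\log^2 n$ is not automatic and must be extracted from (C2) through the maximal weight $\max_k(2\pi k)h_k$, whereas the first bound rests on (C3) alone and the third on Lemma~\ref{lemapprox} followed by a plain Cauchy--Schwarz.
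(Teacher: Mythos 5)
Your proof is correct and follows essentially the same route as the paper's: a factor-and-pull-out-the-maximum argument driven by (C3) for the first bound, the maximal weight bound from (C2) together with $\sum_k k|f_k|<\infty$ (Cauchy--Schwarz and (A3)) for the second, and Lemma~\ref{lemapprox} followed by Cauchy--Schwarz on the split $(kh_k)(k^2|f_k|)$ for the third. The only (harmless) deviation is in the first estimate, where your pointwise bound $\max_k(2\pi k)^2h_k\leq\sqrt{D_1 n}$ yields the slightly sharper $O(n^{3/2})$, whereas the paper bounds the maximum by the full sum and gets $O(n^2)$; both suffice for the stated claim.
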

\begin{proof}[Proof of Lemma \ref{lemhk}]
Note that for any integer $k$, we have $h_k^2 k^6 \leq h_k k^4 \max_{k\geq 1} h_k k^2$. The latter maximum is smaller than the corresponding sum over $k$ which, due to (C3), is at most $D_1 n$. Using (C3) again, one obtains the first inequality. Then due to (C2),
$$ \sum_{k\geq 1}h_k k^2 |f_k| \leq \left(\max_{k\geq 1} h_k k\right) \sum_{k\geq 1} k |f_k| \leq C
\frac{\|h'\|}{\log^2{n}},$$
which yields the second inequality. Finally, using \eqref{approx} and the Cauchy-Schwarz inequality,
$$ \sum_{k\geq 1} h_k k^2 |f_k (f_k - \WH{f_k})| \leq \frac{C}{n} \sum_{k\geq 1} h_k k^3 |f_k|
\leq \frac{C}{n} \left(\sum_{k\geq 1}h_k^2 k^2\right)^{1/2} \left(\sum_{k\geq 1}k^2 f_k^4\right)^{1/2},$$
which yields the third inequality using (A3).
\end{proof}

We can now turn to the proofs of Lemmas \ref{tsy}, \ref{lem:tsy2} and \ref{espcond}.
We shall work conditionally to the event $\te_j=\theta$. To abbreviate the notation, we omit the index $j$
and drop the notation $|\te_j)$. Thus the expectations in the following should be understood at fixed $\te_j$.

The main novelty with respect to \cite{Dalalyan03} consists in proving that the arguments
used by the authors in that paper can be adapted in the discrete setting, by showing that the arguments still go
through when working with the discrete approximations $\WH{f_k}$ and $\WH{g_k}$ instead of $f_k$ and $0$ respectively.

\begin{proof}[Proof of Lemma \ref{tsy}]
The contrast function to maximize is, according to \eqref{definit},
\begin{eqnarray}
L(\tau) & = & \sum_{k\geq 1}h_{k}\left[\uns\sum_{i=1}^{n}\rd \cos(2\pi
k(t_{i}-\tau))Y_{i}\right]^2 \nonumber\\
& = & \sum_{k\geq 1}h_{k}\Bigg[\cos(2\pi k(\te-\tau))\WH{f_{k}}-\sin(2\pi k(\te-\tau))\WH{g_{k}} \nonumber \\
&&  + \frac{1}{\sqrt{n}}(\cos(2\pi k\tau)\xi_{k}+\sin(2\pi k\tau)\xi_{k}^{*})\Bigg]^2, \label{dvtcritere}
\end{eqnarray}
where $\WH{f_{k}}$ and $\WH{g_{k}}$ are defined by \eqref{fk}-\eqref{gk}. The criterion 
$L(\tau)$ is the sum of three terms
$$L(\tau)=\eta_{0}(\tau)+\frac{2}{\sqrt{n}}\|f^{'}\|\eta_{1}(\tau)+\uns \eta_{2}(\tau),$$
where
$$\eta_{0}(\tau)=\sum_{k\geq 1}h_{k}[\cos(2\pi k(\te-\tau))\WH{f_{k}}-\sin(2\pi k(\te-\tau))\WH{g_{k}}]^2$$
$$\eta_{1}(\tau)=\|f^{'}\|^{-1}\sum_{k\geq 1}h_{k}[\cos(2\pi k(\te-\tau))\WH{f_{k}}-\sin(2\pi k(\te-\tau))
\WH{g_{k}}][\cos(2\pi k\tau)\xi_{k}+\sin(2\pi k\tau)\xi_{k}^{*}]$$
$$\eta_{2}(\tau)=\sum_{k\geq 1}h_{k}[\cos(2\pi k\tau)\xi_{k}+\sin(2\pi k\tau)\xi_{k}^{*})]^2.$$
Note that this is the analog of the decomposition of \cite{Dalalyan03}, p. 185, except that here the quantity
$\cos(2\pi k(\te-\tau))\WH{f_{k}} + \sin(2\pi k(\te-\tau))\WH{g_{k}}$ replaces
$\cos(2\pi k(\te-\tau)) f_{k}$. Let us see how the argument is further modified.

The stochastic term $\eta_2$ is exactly the same as in \cite{Dalalyan03}. The term $\eta_1$ is such that its derivative $\eta_1'$  is a zero-mean stationary Gaussian process and one has
\begin{eqnarray*}
 \bE(\eta'_1(\ta)^2) & = & \|f'\|^{-2}\sum_{k\geq 1}h_{k}^2 (2\pi k)^2 (\WH{f}_{k}^2+\WH{g}_{k}^2), \\
 \bE(\eta''_{1}(\ta)^2) & = & 4 \|f'\|^{-2}\sum_{k\geq 1}h_{k}^2 (2\pi k)^4 (\WH{f}_{k}^2+\WH{g}_{k}^2).
\end{eqnarray*}
So, $\eta'_1(\ta)$ has a variance bounded from below by a constant times $\WH{f}_{1}^2$ which is bounded away from zero
for $n$ large enough due to \eqref{approx} and (A3). Moreover, the variance of  $\eta''_{1}$ is
bounded. Hence one can apply Rice formula as in \cite{Dalalyan03} to obtain that there are
 some positive constants $C$ and $D$, such that for all $x>0$,
\begin{equation}\label{eta1}
 \bP(\sup_{\ta\in\Theta} | \eta'_1(\ta) | > x) \leq  C\exp(-Dx^2),
\end{equation}
which is the result obtained in \cite{Dalalyan03}.
Finally we deal with $\eta_{0}$ by writing
\begin{align*}
\eta_{0}(\tau) & = \sum_{k\geq 1}h_{k}\Big[\cos^2(2\pi k(\tau-\te))\WH{f}_{k}^2 - 
2\cos(2\pi k(\tau-\te))\sin(2\pi k(\tau-\te))\WH{f}_{k} \WH{g}_{k} \\
&\qquad +\sin^2(2\pi k(\tau-\te))\WH{g}_{k}^2\Big] = \gamma_{0}(\tau)+\gamma_{1}(\tau)+\gamma_{2}(\tau).
\end{align*}
We have that $\ga_0'(\te)=0$ and $\ga''(\te)\leq -(2\pi)^2\WH{f}_1^2$ is bounded away from zero due to
\eqref{approx} and (A3). Thus, similarly to \cite{Dalalyan03}, one has $\gamma_{0}(\tau)-\gamma_{0}(\te)\leq -C|\tau-\te|^2$ for all $\tau\in\Theta$. Now note that for all real $\ta$,
$$| \ga_1(\ta) + \ga_2(\ta) |  \leq     \sum_{k\geq 1} 2\WH{f}_{k} \WH{g}_{k} + \WH{g}_{k}^2.$$
Hence using \eqref{approx}, the sum $\ga_1(\ta)+\ga_2(\ta)$ is a $O(1/n)$ uniformly in $\ta$. The argument is now completed as follows. Using the obtained bounds, for any positive $x$,
\begin{align*}
&\bP_{\te} \left( |\WH{\theta} - \theta| \sqrt{n \|f^{'}\|^2}> x \right)
\leq \bP_{\te} \left( \sup_{\sqrt{n}\|f^{'}\||\tau-\te|>x}(L(\tau)-L(\te))\geq 0\right)\\
&\leq \bP_{\te} \left(
\sup_{\sqrt{n}\|f^{'}\||\tau-\te|>x}\left[\eta_{0}(\tau)-\eta_{0}(\te)+2\frac{\|f^{'}\|}{\sqrt{n}}
(\eta_{1}(\tau)-\eta_{1}(\te))+\uns(\eta_{2}(\tau)-\eta_{2}(\te))\right]\geq 0\right)\\
&\leq \bP_{\te}\left( \sup_{\sqrt{n}\|f^{'}\||\tau-\te|> x}\eta_{0}(\tau)-\eta_{0}(\te)+
|\tau-\te|\sup_{t\in\Theta}\left\{2\frac{\|f^{'}\||\eta^{'}_{1}(t)|}{\sqrt{n}}+\frac{|\eta^{'}_{2}(t)|}{n}\right\}
\geq 0\right)\\
&\leq \bP_{\te}\left( \sup_{\sqrt{n}\|f^{'}\||\tau-\te|> x} -C + \frac{O(n^{-1})}{|\ta-\te|^2} +
\frac{1}{|\tau-\te|}\sup_{t\in\Theta}\left\{2\frac{\|f^{'}\||\eta^{'}_{1}(t)|}{\sqrt{n}}+\frac{|\eta^{'}_{2}(t)|}{n}\right\} \geq 0\right).
\end{align*}
Now setting $x=x_{n}=K(\log{n})^{1/2}$, for some positive constants $C_1$ and $C_2$ we have
\begin{eqnarray*}
\lefteqn{\bP_{\te}\left( |\WH{\theta} - \theta| \sqrt{n \|f^{'}\|^2}> x_{n}\right)}\\
&& \leq \bP_{\te}\left(  \frac{\sqrt{n}\|f^{'}\|}{x_n} \sup_{t\in\Theta}\left\{2\frac{\|f^{'}\||\eta^{'}_{1}(t)|}{\sqrt{n}}+\frac{|\eta^{'}_{2}(t)|}{n}\right\} \geq - C + \frac{O(n^{-1})}{x_n} \right) \\
&&\leq \bP_{\te}\left(\sup_{t\in\Theta}|\eta'_{1}(t)|\geq Cx_{n}\right)+\bP_{\te}\left(\sup_{t\in\Theta}|\eta'_{2}(t)|\geq C\sqrt{n}x_{n}\right).
\end{eqnarray*}
This is further bounded using \eqref{eta1} for the first term and Lemma 3 in \cite{Dalalyan03} for the second term, which concludes the proof.
\end{proof}
Before we turn to the proof of Lemma \ref{lem:tsy2}, we state a Lemma which summarizes the properties of the criterion $L$, see Equation \eqref{dvtcritere}, and its derivatives. Note that it is in particular a natural adaptation of Lemma 6 in \cite{Dalalyan03}.
\begin{lem}\label{lemel}
Uniformly over $\te\in\Theta$ and $f\in F$, as $n\to\pli$,
\begin{eqnarray}
\bE(L'(\te)^2) & = & \frac{4}{n}\left(\sum_{k\geq 1}(2\pi k)^2 h_k^2 f_k^2 +
(1+o(1))\frac{\|h'\|^2}{n}  \right) \label{lpr}\\
\bE(L''(\te)) & = & -2\sum_{k\geq 1} h_k (2\pi k)^2 f_k^2 + o\left(\frac{\|h'\|^2}{n}\right). \label{lsec}
\end{eqnarray}
Moreover, uniformly over $\te\in\Theta$ and $f\in F$, as $n\to\pli$,
\begin{equation}
\bE\{L''(\te)-\bE(L''(\te))\}^2  =  O(n^{-1}) \qquad \text{and} \qquad
\bE\left(\sup_{\zeta\in\Theta} L^{(3)}(\zeta)^2\right) = O(1).  \label{lder}
\end{equation}
\end{lem}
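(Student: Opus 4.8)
The plan is to work conditionally on $\te_j=\te$ and to write the criterion \eqref{dvtcritere} as $L(\tau)=\sum_{k\geq1}h_k C_k(\tau)^2$, where
$$ C_k(\tau)=\cos(2\pi k(\te-\tau))\WH{f_k}-\sin(2\pi k(\te-\tau))\WH{g_k}+\frac{1}{\sqrt n}\left(\cos(2\pi k\tau)\xi_k+\sin(2\pi k\tau)\xi_k^*\right) $$
is the bracket appearing in \eqref{dvtcritere}. The single observation that makes every computation tractable is that each $C_k$ is, in $\tau$, a pure sinusoid of angular frequency $2\pi k$, so that $C_k''=-(2\pi k)^2 C_k$ and $C_k'''=-(2\pi k)^2 C_k'$ identically. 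Differentiating $L=\sum_k h_k C_k^2$ and using these identities gives the compact forms $L'=2\sum_k h_k C_k C_k'$, $L''=2\sum_k h_k[(C_k')^2-(2\pi k)^2 C_k^2]$ and $L^{(3)}=-8\sum_k h_k(2\pi k)^2 C_k C_k'$. At $\tau=\te$ one has $C_k(\te)=\WH{f_k}+n^{-1/2}B_k$ and $C_k'(\te)=2\pi k\WH{g_k}+n^{-1/2}B_k'$, where $B_k=\cos(2\pi k\te)\xi_k+\sin(2\pi k\te)\xi_k^*$ and $B_k'$ are, for each fixed $k$, centred Gaussians with variances $1$ and $(2\pi k)^2$ and with $\bE[B_kB_k']=0$; moreover the pairs $(B_k,B_k')$ are independent across $k$, being built from the independent $(\xi_k,\xi_k^*)$.

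For \eqref{lpr} I would split $L'(\te)$ into its deterministic part $S_0=2\sum_k h_k\WH{f_k}(2\pi k\WH{g_k})$, a part $S_1$ linear in the Gaussians and a part $S_2$ quadratic in them. All cross expectations vanish (odd Gaussian moments, and $\bE[B_kB_k']=0$), so $\bE(L'(\te)^2)=S_0^2+\bE S_1^2+\bE S_2^2$. Using $\bE B_k^2=1$, $\bE B_k'^2=(2\pi k)^2$ one gets $\bE S_1^2=\frac{4}{n}\sum_k(2\pi k)^2 h_k^2(\WH{f_k}^2+\WH{g_k}^2)$ and $\bE S_2^2=\frac{4}{n^2}\|h'\|^2$. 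The main term $\frac4n\sum_k(2\pi k)^2 h_k^2 f_k^2$ then appears after replacing $\WH{f_k}$ by $f_k$, and it remains to check that $S_0^2$ and all remainders are $o(\|h'\|^2/n^2)$. The decisive input here is that (C2) forces $\|h'\|\geq 2\pi\rho_1\log^2 n\to\pli$ (since $\max_k(2\pi k)h_k\geq 2\pi$ because $h_1=1$), after which the discretization errors, bounded through \eqref{approx} and Lemma \ref{lemhk}, are negligible at the required order. For \eqref{lsec}, the identity $C_k''=-(2\pi k)^2 C_k$ makes the $C_kC_k''$ contribution collapse, yielding $\bE L''(\te)=-2\sum_k h_k(2\pi k)^2(\WH{f_k}^2-\WH{g_k}^2)$; writing $\WH{f_k}^2-f_k^2=2f_k(\WH{f_k}-f_k)+(\WH{f_k}-f_k)^2$ and invoking precisely the third inequality of Lemma \ref{lemhk}, together with \eqref{approx} for the $\WH{g_k}^2$ and quadratic terms, produces the announced $-2\sum_k h_k(2\pi k)^2 f_k^2+o(\|h'\|^2/n)$.

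For the variance estimate in \eqref{lder}, independence across $k$ reduces $\bE\{L''(\te)-\bE L''(\te)\}^2$ to $4\sum_k h_k^2\,\mathrm{Var}[(C_k')^2-(2\pi k)^2 C_k^2]$. Since $C_k(\te)$ and $C_k'(\te)$ are uncorrelated, hence independent being jointly Gaussian, the elementary identity $\mathrm{Var}(X^2)=2\sigma^4+4\mu^2\sigma^2$ for $X\sim\cN(\mu,\sigma^2)$ applied to each factor gives a bound of the form $\frac{C}{n^2}\sum_k h_k^2(2\pi k)^4+\frac{C}{n}\sum_k h_k^2(2\pi k)^4(\WH{f_k}^2+\WH{g_k}^2)$. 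Condition (C3) controls the first sum by $O(n)$, while (A3), \eqref{approx} and the bound $\sum_k h_k^2 k^6\leq Cn^2$ of Lemma \ref{lemhk} control the second, so that the whole expression is $O(n^{-1})$.

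Finally, for the third-derivative bound I would use $L^{(3)}(\zeta)=-8\sum_k h_k(2\pi k)^2 C_k(\zeta)C_k'(\zeta)$ together with the uniform amplitude bounds $\sup_\zeta C_k(\zeta)^2\leq c[\WH{f_k}^2+\WH{g_k}^2+n^{-1}(\xi_k^2+\xi_k^{*2})]$ and $\sup_\zeta C_k'(\zeta)^2\leq c(2\pi k)^2[\WH{f_k}^2+\WH{g_k}^2+n^{-1}(\xi_k^2+\xi_k^{*2})]$, obtained by writing each sinusoid in amplitude--phase form (so that $\sup_\zeta B_k(\zeta)^2=\xi_k^2+\xi_k^{*2}$). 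Squaring the resulting bound on $\sup_\zeta|L^{(3)}|$, taking expectations and using $\bE(\xi_k^2+\xi_k^{*2})^2=8$ together with (C1), (C3) and $\sum_k h_k^2 k^6\leq Cn^2$ yields $O(1)$. I expect the genuine difficulty to lie not in these Gaussian computations, which become routine once the identity $C_k''=-(2\pi k)^2 C_k$ is exploited, but in the bookkeeping of the discretization remainders: one must verify that every term created by replacing $\WH{f_k},\WH{g_k}$ by $f_k,0$ sits strictly below the target orders $\|h'\|^2/n^2$ and $\|h'\|^2/n$, and that the high-frequency sums (such as $\sum_k h_k k^5$) are controlled by (C3) and the finiteness of the support. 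This is exactly where the interplay of \eqref{approx}, the three inequalities of Lemma \ref{lemhk}, and the divergence $\|h'\|\gtrsim\log^2 n$ forced by (C2) becomes essential.
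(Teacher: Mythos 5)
Your proposal is correct and follows essentially the same route as the paper: explicit computation of $L'$, $L''$ and $L^{(3)}$ at $\te$ exploiting the sinusoidal structure of each summand of the criterion, Gaussian moment calculations using the independence and orthogonality of the rotated noise variables, and control of the discretization remainders $\WH{f_k}-f_k$ and $\WH{g_k}$ via \eqref{approx} and Lemma \ref{lemhk} together with the divergence of $\|h'\|$ forced by (C2). The only difference is presentational: you make the identity $C_k''=-(2\pi k)^2C_k$ explicit and organize $\bE(L'(\te)^2)$ by polynomial degree in the noise, whereas the paper simply records the resulting closed forms \eqref{lpthe}--\eqref{lsecth} and expands directly.
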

\begin{proof}[Proof of Lemma \ref{lemel}]
Let us denote
$$ \xi_{k}(\te)=\frac{1}{n}\sum_{i=1}^{n}\sqrt{2}\cos(2\pi k(t_{i}-\te))\veps_{i} \quad \text{and} \quad
\xi_{k}^{*}(\te)=\frac{1}{n}\sum_{i=1}^{n}\sqrt{2}\sin(2\pi k(t_{i}-\te))\veps_{i}. $$
Simple calculations from \eqref{dvtcritere} lead to
\begin{eqnarray}
L'(\te)  & = & 2\sum_{k\geq 1}h_{k}(2\pi k)(\WH{f_{k}}+n^{-1/2}\xi_{k}(\te))
(\WH{g_{k}}+n^{-1/2}\xi_{k}^{*}(\te)) \label{lpthe}\\
L''(\te) & = & 2\sum_{k\geq 1}h_{k}(2\pi k)^2
\left\{-(\WH{f_{k}}+n^{-1/2}\xi_{k}(\te))^2 + (\WH{g_{k}}+n^{-1/2}\xi_{k}^{*}(\te))^2 \right\} \label{lsecth}
\end{eqnarray}
From \eqref{lpthe} we deduce that
\begin{eqnarray*}
\bE(L'(\te)^2) & = & 4\sum_{k\geq 1} h_k^2 (2\pi k)^2 \left(\frac{f_k^2}{n}
 + \frac{1}{n^2}  \right) \\
& & + 4\sum_{k\geq 1} h_k^2 (2\pi k)^2 \left(\WH{f_{k}}^2\WH{g_{k}}^2 + \frac{1}{n}\{2f_k(\WH{f_{k}}-f_k)+(\WH{f_{k}}-f_k)^2\} + \frac{\WH{g_{k}}^2}{n} \right)\\
& & + 8\sum_{k\neq l} h_k h_l (2\pi k)(2\pi l) \WH{f_{k}}\WH{f_{l}}\WH{g_{k}}\WH{g_{l}}.
\end{eqnarray*}
The second term in the last display is bounded using first \eqref{approx} and then Lemma \ref{lemhk}, by  $O(\|h'\|/n^2)$, which is a $o(\|h'\|^2/n^2)$ since due to (C2), the norm $\|h'\|$ tends to $\pli$. 
To bound the third term, note that it is bounded above by the square of
\begin{eqnarray*}
\sum_{k \geq 1} h_k k | \WH{f_{k}}\WH{g_{k}} | & \leq &
\frac{C}{n}\sum_{k \geq 1} h_k \left(k^2 |f_k| + \frac{k^3}{n}\right) 
\leq \frac{C}{n}\left(\frac{\|h'\|}{\log^2 n} + 1\right),
\end{eqnarray*}
using Lemma \ref{lemhk}. The corresponding square is thus a $o(\|h'\|^2/n^2)$. For the second derivative, 
from \eqref{lsecth} we deduce that
\begin{eqnarray}
\bE(L''(\te)) & = & 2\sum_{k\geq 1} h_k(2\pi k)^2(-\WH{f_{k}}^2 + \WH{g_{k}}^2) \label{esplsec}\\
\bE\{L''(\te)-\bE(L''(\te))\}^2 & = & 
\frac{16}{n}\sum_{k\geq 1} h_k^2(2\pi k)^4(\WH{f_{k}}^2 + \WH{g_{k}}^2+\frac{1}{n}) \nonumber
\end{eqnarray}
and proceeding as for \eqref{lpthe}, using \eqref{approx} and Lemma \ref{lemhk}, one obtains \eqref{lsec} and
the first part of \eqref{lder}. Finally, the result about $L^{(3)}$ is obtained as follows. Proceeding as in Lemma 6 in \cite{Dalalyan03}, one easily sees that 
$$ \sup_{\zeta\in\Theta}|L^{(3)}(\zeta)| \leq C\sum_{k\geq 1} h_k k^3 \left(\WH{f_k}^2 + 
\WH{g_k}^2 +\frac{1}{n}(\xi_k^2 + {\xi_k^*}^2)\right). $$
The deterministic part of the last display is bounded by a constant times
$$ \sum_{k\geq 1}  h_k k^3 \left( 2f_k^2 + 2(\WH{f_k}-f_k)^2+\WH{g_k}^2 \right) \leq 
C\sum_{k\geq 1}  h_k \left(k^3 f_k^2 + k^3\frac{k}{n}\right) \leq C'.$$
To obtain the first inequality,  using \eqref{approx} we have bounded one $\WH{f_k}-f_k$ and one $\WH{g_k}$ by $Ck/n$ and the other ones by a constant. The second inequality is obtained using (A3) and (C3), which concludes the proof of the Lemma.
\end{proof}
Note that all the dependence in $\WH{f_{k}}$ and $\WH{g_{k}}$ has vanished in Lemma \ref{lemel}, replaced by results in function of $f_k$ only. In fact, the results of this Lemma are exactly the ones used in \cite{Dalalyan03} to prove the second order expansion, so in fact using this observation there is nothing left to prove to obtain Lemma \ref{lem:tsy2}. We include however the end of the proof for completeness.
\begin{proof}[Proof of Lemma \ref{lem:tsy2}]
As in \cite{Dalalyan03}, the proof is in two steps. The first step is to prove that  $\WH{\ta}$ defined by the following relation has the desired second order expansion,
\begin{equation}\label{tauchapeau}
L'(\te)+(\WH{\ta}-\te)\bE(L''(\te))=0.
\end{equation}
Let us evaluate $\bE((\WH{\ta}-\te)^2I_{n}(f))$, where $I_{n}(f)=n\|f'\|^2$. From \eqref{lpr} and \eqref{lsec} it follows
$$\bE((\WH{\ta}-\te)^2I_{n}(f))=\left[1+\|f'\|^{-2}\sum_{k\geq 1}(h_{k}^2-1)(2\pi k)^2 f_{k}^2
+\|f'\|^{-2}\|h'\|^2(1+o(1))/n\right]$$
$$\times \left[1+\|f'\|^{-2}\sum_{k\geq 1}(h_{k}-1)(2\pi
k)^2f_{k}^2+o(\|h'\|^2/n)\right]^{-2}.$$
Let us expand the square at the denominator and then use a Taylor expansion of the function $x\to (1+x)^{-1}$. Then, one computes the product with the numerator and uses assumption (T). One obtains
$$\bE((\WH{\ta}-\te)^2I_{n}(f))=1+(1+o(1))\|f'\|^{-2}R_{n}(h,f),$$
which is the desired expansion for $\WH{\ta}$.

In a second step, we prove that $\WH{\te}$ and $\WH{\ta}$ are close enough. It is sufficient to do this on the set
$\cA_{1}=\{|\WH{\te}-\te|\leq D(n^{-1}\log{n})^{1/2}\}$ since the probability of its complement is negligible
due to Lemma \ref{tsy}.
By definition of $\hat{\te}$, we have $L'(\hat{\te})=0$. By Taylor's expansion,
\begin{equation*}
0=L'(\hat{\te})=L'(\te)+(\hat{\te}-\te)L''(\te)+\frac{(\hat{\te}-\te)^2}{2}L^{(3)}(\zeta),
\end{equation*}
for some random variable $\zeta$, which can also be written
\begin{eqnarray}
0 & = & L'(\te)+(\hat{\te}-\te)\bE(L''(\te) )\nonumber\\
& & +(\hat{\te}-\te)[L''(\te)-\bE(L''(\te))]+\frac{(\hat{\te}-\te)^2}{2}L^{(3)}(\zeta).\label{contlp}
\end{eqnarray}
Subtracting \eqref{tauchapeau} and \eqref{contlp}, one obtains
\begin{eqnarray*}
\lefteqn{\bE\left((\WH{\te}-\WH{\ta})^2 {\bf 1}_{\cA_{1}}\right)\bE(L''(\te))^2} \\
&& \leq 2\bE\left((\WH{\te}-\te)^2\{L''(\te)-\bE(L''(\te))\}^2
{\bf 1}_{\cA_{1}}\right)+\bE\left((\WH{\te}-\te)^4\sup_{\zeta\in\Theta}|L^{(3)}(\zeta)|^2{\bf 1}_{\cA_{1}}\right).
\end{eqnarray*}
Using \eqref{lder} and the definition of $\cA_1$, one obtains $\bE((\WH{\te}-\WH{\ta})^2I_{n}(f){\bf 1}_{\cA_{1}})\leq Cn^{-1}\log^2{n}$ which is a $o(R_{n}(h,f))$. Finally, by similar arguments, one also sees that $\bE((\WH{\te}-\WH{\ta})(\WH{\ta}-\te){\bf 1}_{\cA_{1}})$ is a
$o(R_{n}(h,f))$ which concludes the proof.
\end{proof}
{\underline{Proof of Lemma \ref{espcond}}:}
\begin{proof}
Starting from \eqref{contlp}, using the triangle and Cauchy-Schwarz inequalities,
\begin{eqnarray*}
\left|\bE\left((\WH{\te}-\te)\right)\bE(L''(\te))\right| & \leq &
|\bE(L'(\te))|\\
&&+(\bE(\WH{\te}-\te)^2)^{1/2}\left\{ \bE[L''(\te)-\bE(L''(\te))]^2\right\}^{1/2}\\
&& +\frac{1}{2}\bE\left\{(\WH{\te}-\te)^2\sup_{\zeta\in\Theta}|L^{(3)}(\zeta)|\right\}.
\end{eqnarray*}
The first term on the right-hand side of the last display can be bounded using \eqref{approx},
$$ \bE(L'(\te)) = \left|2\sum_{k\geq 1}h_{k}(2\pi k)\WH{f_{k}}\WH{g_{k}}\right| 
 \leq C\sum_{k\geq 1}h_{k} \left(k^2\frac{|f_k|}{n} + \frac{k^3}{n^2}\right), $$
 which is a $O(n^{-1})$ due to the assumption on $f$ and (C3). To bound the second term, we use \eqref{lder} and the fact that the result of Lemma \ref{lem:tsy2} implies
$$\bE(\WH{\te}-\te)^2   =   O(n^{-1}). $$
We note that the latter relation could also be checked directly, similarly to the proof of Lemma  \ref{lem:tsy2} but without keeping second order terms.
To bound the third term, we use Cauchy-Schwarz inequality
$$\bE\left\{(\WH{\te}-\te)^2\sup_{\zeta\in\Theta}|L^{(3)}(\zeta)|\right\}
\leq \bE\left((\WH{\te}-\te)^4\right)^{1/2} \bE\left\{\sup_{\zeta\in\Theta}|L^{(3)}(\zeta)|^2\right\}^{1/2}.$$
Now with $\cA_1=\{|\WH{\te}-\te|\leq D(n^{-1}\log{n})^{1/2}\}$, due to Lemma \ref{tsy},
\begin{eqnarray*}
\bE((\WH{\te}-\te)^4) & = & \bE((\WH{\te}-\te)^4{\bf 1}_{\cA_{1}}) + \bE((\WH{\te}-\te)^4{\bf 1}_{\cA_{1}^c}) \\
& \leq & C (\log{n}/n)^2 + C \exp(- d D^2 \log n).
\end{eqnarray*}
Choosing $D$ large enough, we obtain that the third term is a $O(\log{n}/n)$.
The fact that $|\bE(L''(\te))|$ is bounded from above and below by positive constants, which follows from 
\eqref{esplsec} and (A3)-(C3),  yields the announced result.
\end{proof}

\vspace{0.5cm}
\noindent{\bf Acknowledgments.} The authors are grateful to Professor Alexandre Tsybakov for an insightful
remark concerning this work. We also thank a referee whose comments lead us to correct a mistake in a previous version of this work.


\begin{center}
  \hrule
\end{center}

%
%

\end{document}
